\numberwithin{equation}{section}
\newtheorem{theorem}{Theorem}[section]
\newtheorem{lemma}[theorem]{Lemma}
\theoremstyle{definition}
\newtheorem{definition}[theorem]{Definition}
\newtheorem{example}[theorem]{Example}
\def \deg {{\rm deg}}
\def \leq{\leqslant}
\def \geq{\geqslant}
\def \preceq{\preccurlyeq}
\def \N{\mathbb{N}}
\def \O{\Omega}
\def \mod#1{{\:({\rm mod}\ #1)}}
\let\oldproofname=\proofname
\renewcommand{\proofname}{\rm\bf{\oldproofname}}
\newcommand{\ignore}[1]{}
\title{\bf MAX for $k$-independence in multigraphs}
\author{Nevena Franceti\'{c} \footnote{
School of Mathematical Sciences, Monash University, Victoria 3800, Australia} \qquad Sara Herke \footnote{School of Mathematics and Physics, The University of Queensland, QLD 4072, Australia} \qquad Daniel Horsley \footnotemark[1] }
\date{}
\begin{document}
\def\baselinestretch{1.2}\small\normalsize

\maketitle

\begin{abstract}
For a fixed positive integer $k$, a set $S$ of vertices of a graph or multigraph is called a {\it $k$-independent set} if the subgraph induced by $S$ has maximum degree less than $k$.  The well-known algorithm MAX finds a maximal $k$-independent set in a graph or multigraph by iteratively removing vertices of maximum degree until what remains has maximum degree less than $k$.  We give an efficient procedure that determines, for a given degree sequence $D$, the smallest cardinality $b(D)$ of a $k$-independent set that can result from any application of MAX to any loopless multigraph with degree sequence $D$.  This analysis of the worst case is sharp for each degree sequence $D$ in that there exists a multigraph $G$ with degree sequence $D$ such that some application of MAX to $G$ will result in a $k$-independent set of cardinality exactly $b(D)$.
\end{abstract}

\section{Introduction}

Unless otherwise specified, all sets in this paper are multisets and all multigraphs are loopless. Let $\N$ denote the set of nonnegative integers.  The {\it degree} of a vertex $v$ in a multigraph $G$, denoted $\deg_G(v)$, is the number of edges incident with $v$ in $G$. For a multigraph $G$ and a vertex $v$ of $G$, we define $G-v$ to be the multigraph obtained from $G$ by deleting $v$ and all of the edges incident with $v$. We use $\Delta(G)$ to denote the maximum degree of a multigraph $G$, and $\max(D)$ to denote the maximum element of a finite multiset $D$ of integers.

For our purposes, we define the \emph{degree sequence $D$ of a multigraph $G$} to be the multiset $\{\deg_G(v):v \in V(G)\}$.  We say a degree sequence with $n$ elements has {\it order} $n$. For conciseness, we use $\sum D = \sum_{z \in D} z$ for a finite multiset $D$ of integers.  It is well known that a finite multiset $D$ of nonnegative integers is the degree sequence of some multigraph if and only if $\sum D$ is even and $\sum D \geq 2\max(D)$ \cite{Ha}. When we say that a multiset is a \emph{degree sequence} we mean that it is the degree sequence of some multigraph.

In the first four sections of this paper we take $k$ to be a fixed positive integer. Many of the concepts and operations we define are implicitly dependent on $k$. A subset $S$ of the vertex set of a multigraph $G$ is
said to be {\it $k$-independent} if the subgraph of $G$ induced by $S$ has maximum
degree less than $k$. Setting $k = 1$ recovers the usual notion of an independent set. Generally one is interested in finding $k$-independent sets of large cardinality.  For a multigraph $G$, the {\it $k$-independence number} of $G$, denoted $\alpha_k(G)$, is the maximum cardinality of a $k$-independent set in $G$. Determining $\alpha_k(G)$ for an arbitrary graph $G$ is NP-Complete \cite{JaPe}. A survey on $k$-independence can be found in \cite{CFHV}.

One of the simplest and most studied algorithms for finding a $k$-independent set in a multigraph is the so-called MAX algorithm. The algorithm was initially introduced for finding a $1$-independent set in 1983 by Griggs \cite{Gr}, but it easily generalises to higher values of $k$. MAX operates by iteratively removing a vertex of maximum degree until the multigraph that remains has maximum degree less than $k$, see Algorithm \ref{alg:MAX}.

\begin{algorithm} \label{alg:MAX}
    \SetKwInOut{Input}{Input}
    \SetKwInOut{Output}{Output}
    \Input{$G$, a multigraph of order $n$}
    \Output{a maximal $k$-independent set of $G$}
    $H := G$\;
    \While{$\Delta(H) \geq k$}
      {
        Choose a vertex $v \in V(H)$ such that $\deg_{H}(v) = \Delta(H)$\;
        $H:=H-v$;
      }
	return $V(H)$\;
    \caption{MAX algorithm for finding a $k$-independent set}
\end{algorithm}

Note that the choice of a maximum degree vertex in line 3 of Algorithm \ref{alg:MAX} is arbitrary.   We use the phrase {\it for any application of MAX} to mean for any choice of maximum degree vertices throughout Algorithm \ref{alg:MAX} and the phrase {\it for some application of MAX} to mean for some choice of maximum degree vertices throughout Algorithm \ref{alg:MAX}.

The MAX algorithm and its variants have been extensively studied in the context of investigating $k$-independence numbers of graphs or multigraphs with given degree sequences \cite{BGHR,CaTu,Je,LBS1,LBS2,STY}. Most notably, Caro and Tuza's 1991 analysis of the MAX algorithm \cite{CaTu} yielded a closed form lower bound on $k$-independence number of a multigraph in terms of its degree sequence which remains the best general such bound known. As well as closed form lower bounds, analyses of MAX that give rise to procedural bounds have also been a topic of interest \cite{AmDaPe,FaMaSa,GrKl,Je,Se,Tr}. In particular, for simple graphs, Jelen \cite{Je} gives a procedural lower bound on the size of a $k$-independent set yielded by MAX based on the concept of ``$k$-residues''. He shows his bound is the best one that can be obtained as a weighted sum of the terms of the $k$-residue, but there are many degree sequences for which it is not tight. Jelen's work has been extended in \cite{AmDaPe}. Here we show that, for multigraphs, there is an alternative procedural bound that is tight for every degree sequence.

We next state the main result of this paper, noting that $b_k(D)$ is defined in Section~\ref{s:procedure} and is a positive integer that can be calculated in $O(\sum D)$ time for any degree sequence $D$. For each degree sequence $D$, Theorem \ref{kIndepBound} gives a tight procedural bound on the worst case behaviour of MAX on a multigraph with degree sequence $D$.

\begin{theorem}\label{kIndepBound}
Let $D$ be a degree sequence and $k$ be a fixed positive integer. Then any application of MAX to any multigraph with degree sequence $D$ will result in a $k$-independent set of cardinality at least $b_k(D)$.  Furthermore, there exists a multigraph $G$ with degree sequence $D$ such that some application of MAX to $G$ will result in a $k$-independent set of cardinality exactly $b_k(D)$.
\end{theorem}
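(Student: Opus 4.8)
The plan is to prove both halves by induction on $\sum D$, built on a recursive reformulation of $b_k$ and a single realizability lemma. First I would record the elementary consequence of the characterization of degree sequences from \cite{Ha}: if $D$ is a degree sequence with $m := \max(D) \geq k$, then a multiset $D'$ equals the degree sequence of $G - v$ for some multigraph $G$ with degree sequence $D$ and some vertex $v$ with $\deg_G(v) = m$ \emph{if and only if} $D'$ is itself a degree sequence obtainable from $D$ by deleting one copy of $m$ and subtracting a total of $m$ from the remaining entries without making any entry negative; moreover, in the ``if'' direction the extension works for \emph{every} multigraph realizing $D'$ (attach $v$ to the vertices of that multigraph with the multiplicities prescribed by the decrements). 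Call this the one-step lemma. I expect that the procedure of Section~\ref{s:procedure} computes a function satisfying
\[
b_k(D) = \begin{cases} |D| & \text{if } \max(D) < k,\\[2pt] \min_{D'} b_k(D') & \text{if } \max(D) \geq k,\end{cases}
\]
with $D'$ ranging over exactly the multisets described by the one-step lemma, and I would take this recursive description as the working definition (its agreement with the procedural definition, and the $O(\sum D)$ running time, being a separate, purely computational matter).

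For the lower bound, fix a multigraph $G$ with degree sequence $D$ and an arbitrary application of MAX. Since $\Delta(G) = \max(D)$, if $\max(D) < k$ then MAX returns $V(G)$, of size $|D| = b_k(D)$. If $\max(D) = m \geq k$, the first step of MAX deletes some vertex $v$ with $\deg_G(v) = m$; then $G - v$ is a multigraph whose degree sequence $D'$ is, by the one-step lemma, among the multisets defining $b_k(D)$, so $b_k(D) \leq b_k(D')$. The rest of the chosen application of MAX is an application of MAX to $G - v$, and since $\sum D' = \sum D - 2m < \sum D$ the induction hypothesis says it returns a $k$-independent set of size at least $b_k(D') \geq b_k(D)$; this set is exactly what MAX returns on $G$.

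For the tightness statement, I would induct the other way, constructing an extremal multigraph by descending through the recursion to a minimizer. If $\max(D) < k$, any multigraph $G$ with degree sequence $D$ will do, as MAX returns all of $V(G)$. If $\max(D) = m \geq k$, choose $D'$ attaining the minimum in the recursion, so $b_k(D') = b_k(D)$; by induction there is a multigraph $G'$ with degree sequence $D'$ and an application of MAX to $G'$ returning a $k$-independent set $S$ of size $b_k(D')$. The one-step lemma produces a multigraph $G$ with degree sequence $D$ containing a vertex $v$ of maximum degree with $G - v = G'$. The application of MAX to $G$ that first deletes $v$ and then imitates the chosen application to $G'$ is legitimate and returns $S$, with $|S| = b_k(D') = b_k(D)$.

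The main obstacle is the one-step lemma and its clean interaction with the recursion. Its ``if'' direction requires knowing that a chosen decrement pattern is \emph{simultaneously} compatible with $D'$ being a legitimate degree sequence on the remaining vertices --- this is exactly where the condition $\sum E \geq 2\max(E)$ is used --- and the recursion step of the tightness argument requires that the minimizing $D'$ is a genuine degree sequence (so the induction hypothesis applies) and is reachable from $D$ in a single MAX step; both are delivered by stating the lemma as an ``if and only if''. Granting the lemma and the recursive description of $b_k$, the two inductions are short; the remaining work --- showing the procedure of Section~\ref{s:procedure} actually evaluates this recursion and does so in $O(\sum D)$ time --- is computational and independent of the extremal analysis.
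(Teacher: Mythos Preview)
Your one-step lemma is correct, and with the recursive definition
\[
b^*_k(D)=\begin{cases}|D|&\text{if }\max(D)<k,\\ \min_{D'}b^*_k(D')&\text{otherwise,}\end{cases}
\]
ranging over reductions $D'$, your two inductions do establish that $b^*_k(D)$ is exactly the minimum cardinality of a $k$-independent set that some application of MAX can return on some multigraph with degree sequence $D$. That part is fine.

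The gap is that you have relocated, not removed, the entire content of the theorem. Theorem~\ref{kIndepBound} is about the procedurally defined $b_k(D)$ of Section~\ref{s:procedure}; once you know $b^*_k$ is the true minimum, the theorem is \emph{equivalent} to the claim $b_k(D)=b^*_k(D)$, which is precisely what you defer as ``a separate, purely computational matter''. It is not. The recursion for $b^*_k$ ranges over all reductions of $D$, of which there may be exponentially many; the procedure commits to the single reduction $\Omega(D)$ at each step. Showing that this greedy choice never loses --- that $b_k(\Omega(D))\leq b_k(D')$ for every nontrivial reduction $D'$, and that this optimality propagates through the iteration --- is an extremal statement about degree sequences, not a calculation. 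The paper proves it by introducing a partial order $\preceq$ on multisets and establishing two substantial lemmas: $\Omega(E)\preceq E'$ for every nontrivial reduction $E'$ (Lemma~\ref{lem:Omega_vs_other}), and $D\preceq E\Rightarrow\Omega(D)\preceq\Omega(E)$ (Lemma~\ref{lem:OmegaStep}). Together these give $\Omega^i(D)\preceq D_i$ along any MAX trajectory, hence $b_k(D)\leq n-p$. This machinery is exactly the ``extremal analysis'' you claim to have separated out; without it, or a genuine replacement, your proposal proves only the easy direction $b^*_k(D)\leq b_k(D)$.
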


In the case of lower bounds for the $k$-independence number of simple graphs in terms of their average degree, Caro and Tuza's result in \cite{CaTu} has since been improved upon by results based on more complicated procedures than MAX \cite{CaHa,Ko}. It is worth noting, however, that these methods do not appear to generalise readily to multigraphs.

In Section~\ref{s:procedure} we introduce some further notation and use this notation to describe the procedure for calculating $b_k(D)$.  In Section~\ref{s:mainproof} we prove our main result using a technical lemma (Lemma \ref{lem:main}) whose proof is deferred to Section~\ref{s:lemmaproofs}.  In Section~\ref{s:coverings} we discuss an application to finding lower bounds for the size of coverings. In Section~\ref{s:loops} we consider the variant of the problem in which we allow our multigraphs to have loops. We conclude with a short discussion of the case of simple graphs in Section~\ref{s:simple}.

\section{Definitions and preliminary results \label{s:procedure}}

For multisets of nonnegative integers $D$ and $E$, we define $D \uplus E$ and $D \setminus E$ and so that $\mu_{D \uplus E}(z)=\mu_{D}(z)+\mu_{E}(z)$ and $\mu_{D \setminus E}(z)=\max(0,\mu_{D}(z)-\mu_{E}(z))$, where $\mu_X(z)$ denotes the number of elements of the multiset $X$ equal to the integer $z$.

\begin{definition}[{\bf reduction}] Let $D$ be a degree sequence.  We say that a degree sequence $D'$ is a {\it reduction} of $D$ if there is a multigraph $G$ with degree sequence $D$ and a vertex $v \in V(G)$ with $\deg_G(v) = \Delta(G)$ such that $G - v$ has degree sequence $D'$.
\end{definition}

Note that if $D$ is a degree sequence of order $n$ and $D'$ is a reduction of $D$ then $D'$ is of order $n-1$ and $\sum D' = \sum(D)-2\max(D)$.

If $G$ is a multigraph of order $n$ with maximum degree less than $k$, then applying MAX to $G$ will trivially result in a $k$-independent set of cardinality $n$. Accordingly, we make the following definition.

\begin{definition}[{\bf trivial}]We say a degree sequence is \emph{trivial} if its maximum element is less than $k$. Otherwise it is \emph{nontrivial}.
\end{definition}

Given a degree sequence $D$ of order $n$, we now define a procedure to obtain another degree sequence $\Omega(D)$ of order $n-1$. If every reduction of $D$ is trivial, then we will set $\Omega(D)=\{0,\ldots,0\}$ to indicate this fact. Otherwise we will define $\Omega(D)$ to be a particular nontrivial reduction of $D$. We define $\Omega$ in Definition~\ref{def:Omega} and establish these properties in Lemma \ref{lm:properties of Omega(D)}.  It will turn out that one worst case of applying the MAX algorithm to a multigraph with degree sequence $D$ will produce multigraphs whose degree sequences are obtained by iteratively applying $\Omega$ to $D$.

\begin{definition}[{\bf $\bm{x}$-decrement}]
Let $E$ be a multiset of nonnegative integers and let $x$ be a positive element of $E$. We say that $D$ is obtained from $E$ by an {\it $x$-decrement} if $D=(E \setminus \{x\}) \uplus \{x-1\}$.
\end{definition}

\begin{definition}[{\bf $\bm{\Omega(D)}$, decrement sequence}]\label{def:Omega}
Let $D$ be a degree sequence of order $n$ and let $A_0=D \setminus \{\max(D)\}$. If $\sum A_0 < \max(D)+2k$ or if
$\max(A_0)<k$, then define $\Omega(D)$ to be the degree sequence $\{0,\ldots,0\}$ of order $n-1$. Otherwise, let $s = \sum A_0$ and let $A_1,\ldots,A_s$ be the sequence of multisets such that, for $i=1,\ldots,s$, $A_{i}$ is obtained from $A_{i-1}$ by an $a_i$-decrement, where
\begin{itemize}[nosep]
    \item
$a_i=\max(A_{i-1})$ if $\max(A_{i-1})>k$;
    \item
$a_i$ is the smallest positive element of $A_{i-1}$ otherwise.
\end{itemize}
We define the {\it decrement sequence} of $D$ to be $(a_1,\ldots,a_s)$ and we define $\Omega(D)$ to be $A_{\max(D)}$. (When $i \not\equiv \max(D) \mod{2}$ or $i >s-2k$, $A_i$ is not a degree sequence but we shall prove in Lemma~\ref{lm:properties of Omega(D)} that $\Omega(D)$ is always a degree sequence.)
\end{definition}

It is often useful to view a degree sequence $D$ as an integer partition of $\sum D$ (where we allow parts equal to 0) and to visualise $D$ in a Ferrers diagram where the elements of $D$ are given by the row lengths.

\begin{example}\label{ex:Omega}
If $k=3$ and $D=\{1,2,2,4,4,5,6\}$ then $s = 18$, the decrement
sequence of $D$ is $(5,4,4,4,1,2,1,2,1,3,2,1,3,2,1,3,2,1)$, and $\Omega(D) = \{0,1,2,3,3,3\}$. Figure~\ref{fig:Omega} shows Ferrers diagrams for $D$ and $\Omega(D)$ with the dashed lines indicating that $k=3$.
\end{example}

\begin{figure}[h!]
\centering
 \includegraphics[scale=0.5]{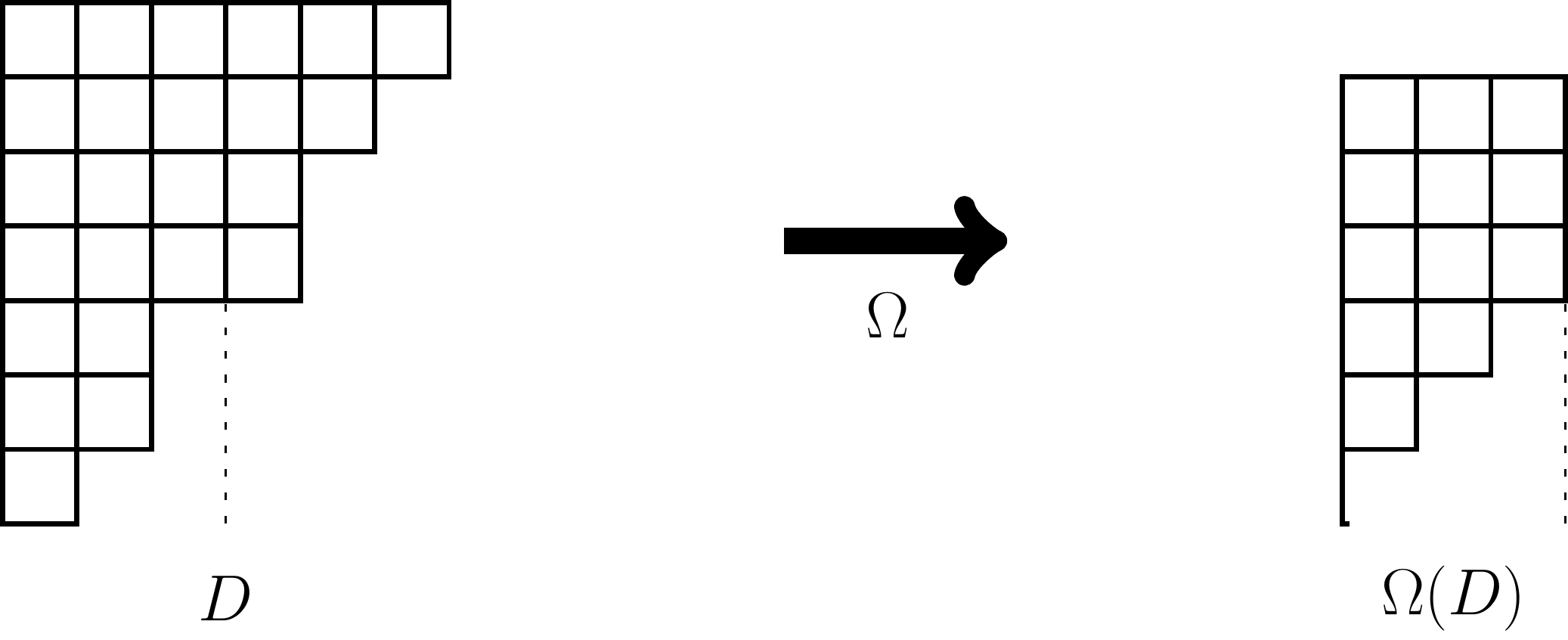}
\caption{Ferrers diagrams for $D$ and $\Omega(D)$ from Example~\ref{ex:Omega}}
\label{fig:Omega}
\end{figure}

Let $D$ be a degree sequence of order $n$. We let $\Omega^0(D)=D$, $\Omega^1(D)=\Omega(D)$, $\Omega^2(D)=\Omega(\Omega(D))$ and so on up to $\Omega^n(D)$. Observe that $\Omega^n(D)$ must be the empty set and hence $\Omega^n(D)$ is trivial.

\begin{definition}[$\bm{b(D)}$] Let $D$ be a degree sequence of order $n$. We define $b(D) = |\Omega^p(D)| = n-p$, where $p$ is the least nonnegative integer such that $\Omega^{p}(D)$ is trivial. We use the notation $b_k(D)$ when we wish to specify the value of $k$, or simply to emphasise the dependence on $k$.
\end{definition}

It is clear that, for any degree sequence $D$, $b(D)$ can be computed in $O(\sum D)$ time.

\begin{example}
Revisiting Example~\ref{ex:Omega}, let $k=3$ and $D=\{1,2,2,4,4,5,6\}$. Then $\Omega(D) = \{0,1,2,3,3,3\}$, $\Omega^2(D) = \{0,0,0,3,3\}$ and $\Omega^3(D) = \{0,0,0,0\}$.  Hence $b(D) = 7-3 = 4$ because $D$, $\Omega(D)$ and $\Omega^2(D)$ are nontrivial but $\Omega^3(D)$ is trivial.
\end{example}

In view of Theorem~\ref{kIndepBound}, it will transpire that $b(D)$ is the cardinality of a $k$-independent set found by some application of MAX to some multigraph with degree sequence $D$ and we will show that this is the worst case.  We can prove the second part of Theorem~\ref{kIndepBound} (see Lemma \ref{lm:realisable}) without too much difficulty; to do so, we require the following lemma.

\begin{lemma}\label{lm:properties of Omega(D)}
Let $D$ be a nontrivial degree sequence.
\begin{itemize}
    \item[\textup{(i)}]
If $\O(D) = \{0,\dots,0\}$, then every reduction of $D$ is trivial.
    \item[\textup{(ii)}]
If $\O(D) \neq \{0,\dots,0\}$, then $\O(D)$ is a nontrivial degree sequence. Furthermore, for any multigraph $G'$ with degree sequence $\O(D)$, there is a multigraph $G$ such that deleting some vertex of maximum degree in $G$ results in $G'$.
\end{itemize}
\end{lemma}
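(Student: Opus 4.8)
The plan is to treat the two parts separately, after recording two basic facts about reductions. Write $m=\max(D)$ and $A_0=D\setminus\{m\}$; note $\max(A_0)\le m$ and, since $D$ is a degree sequence, $\sum A_0\ge m$, so $A_m$ is defined. Fact (a): if $D'$ is a reduction of $D$, realised as $G-v$, then every vertex of $G-v$ has degree at most its degree in $G$, so $\max(D')\le\max(A_0)$ (and $\sum D'=\sum D-2m$, as already noted). Fact (b): conversely, if a degree sequence $E$ is obtainable from $A_0$ by a sequence of $m$ decrements each applied to a positive element, then every multigraph $G'$ with degree sequence $E$ equals $G-v$ for some multigraph $G$ and some maximum-degree vertex $v$ of $G$. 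To see (b), take $G'$, add a new vertex $v$, and join $v$ to each vertex $u$ of $G'$ by a number of parallel edges equal to the amount by which the corresponding element of $A_0$ was decremented (using the correspondence between the elements of $A_0$ and those of $E$ recorded by the decrement process); then $v$ has degree $m\ge\max(A_0)$, so $v$ has maximum degree in $G$, $G$ has degree sequence $A_0\uplus\{m\}=D$, and $G-v=G'$.

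Part (i) is immediate from (a): if $\max(A_0)<k$ then every reduction $D'$ satisfies $\max(D')\le\max(A_0)<k$; and if $\sum A_0<m+2k$ then every reduction $D'$ satisfies $\sum D'=\sum A_0-m<2k$, so $2\max(D')\le\sum D'<2k$ and $\max(D')<k$.

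For part (ii), assume $\sum A_0\ge m+2k$ and $\max(A_0)\ge k$; in particular $\sum A_m=\sum A_0-m\ge 2k$. Since each $A_i$ is obtained from $A_0$ by $i$ decrements of positive elements, fact (b) reduces the ``furthermore'' assertion to showing $\Omega(D)=A_m$ is a nontrivial degree sequence. As $\sum A_m=\sum D-2m$ is even, it remains to prove $\max(A_m)\ge k$ and $\sum A_m\ge 2\max(A_m)$. I would first establish, by a routine induction, the shape of the decrement process: there is a ``flattening'' phase of length $t=\sum_{z\in A_0}\max(0,z-k)$ in which the maximum is repeatedly decremented; through it the maximum is non-increasing, stays above $k$, and takes each value $v\in\{k+1,\dots,\max(A_0)\}$ on a block of $|\{z\in A_0:z\ge v\}|$ consecutive steps, so that $\max(A_i)=v$ exactly when $\sum_{z\in A_0}\max(0,z-v)\le i<\sum_{z\in A_0}\max(0,z-v+1)$; and $\max(A_t)=k$. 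Afterwards, in a ``smoothing'' phase the smallest positive element is decremented; the entries equal to $k$ survive until all smaller positive values reach zero, after which $A_i$ is made up of copies of $k$, at most one smaller positive value, and zeros. Since $\sum A_m\ge 2k$, this description shows that $A_m$ still contains a copy of $k$ whenever $m\ge t$; so $\max(A_m)=k$ when $m\ge t$ and $\max(A_m)>k$ when $m<t$, giving $\max(A_m)\ge k$ always, and also $\sum A_m\ge 2k=2\max(A_m)$ when $m\ge t$.

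It remains to handle $m<t$. Then $v:=\max(A_m)>k$, and $b:=|\{z\in A_0:z\ge v\}|\ge 1$ since $\max(A_0)\ge v$. Because step $m$ lies in the block for $v$, we have $m<\sum_{z\in A_0,\,z\ge v}(z-v+1)$; combined with $\sum A_0\ge\sum_{z\ge v}z$ this yields $\sum A_m=\sum A_0-m\ge(v-1)b+1$. If $b\ge 3$ then $(v-1)b+1\ge 3v-2\ge 2v$, using $v\ge 2$. If $b=1$ then the unique element of $A_0$ that is at least $v$ is $\max(A_0)$, and one checks $\max(A_m)=v$ forces $m=\max(A_0)-v$, contradicting $m\ge\max(A_0)$. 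The case $b=2$ is the crux: the bound above gives only $\sum A_m\ge 2v-1$, but $\sum A_m$ is even while $2v-1$ is odd, so in fact $\sum A_m\ge 2v=2\max(A_m)$. I expect this parity step --- the one place where it is essential that $D$ be a genuine degree sequence, so that $\sum D$ and hence $\sum A_m$ is even --- to be the only truly delicate point; the underlying analysis of the decrement process is somewhat lengthy but routine.
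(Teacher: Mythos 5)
Your proposal is correct, and its skeleton matches the paper's: part (i) is argued exactly as in the paper, and your Fact (b) (re-attach a vertex $v$ of degree $m=\max(D)$ to a multigraph with degree sequence $A_m$, with multiplicities given by how much each element was decremented) is the same construction the paper uses for the ``furthermore'' clause, just with the parallel edges added all at once instead of one per decrement. Where you genuinely diverge is in verifying that $\Omega(D)=A_m$ is a nontrivial degree sequence. The paper does this with two short contradiction arguments: for $\max(\Omega(D))\geq k$ it looks at the transition index $j$ where the maximum would first drop below $k$ and notes $A_j$ would then be a single $k$ plus zeros, contradicting $\sum A_j \geq \sum\Omega(D)\geq 2k$; for $\sum\Omega(D)\geq 2\max(\Omega(D))$ it shows that otherwise every $a_i$ exceeds $m'=\max(\Omega(D))$, forcing $\max(A_0)=m'+m>m$. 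You instead give an explicit two-phase (``flattening''/``smoothing'') description of the decrement process with the block formula for when $\max(A_i)=v$, and then split on $b=|\{z\in A_0:z\geq v\}|$: $b=1$ is impossible since it would force $m=\max(A_0)-v<\max(A_0)\leq m$, $b\geq 3$ gives the bound outright, and $b=2$ is rescued by parity of $\sum A_m$. I checked the block formula, the smoothing-phase claim that no copy of $k$ can disappear while $\sum A_i\geq 2k$, and the inequality $\sum A_m\geq b(v-1)+1$; all are correct, and the ``routine induction'' you defer really is routine. Both proofs lean on the evenness of $\sum\Omega(D)$ at exactly one point (your $b=2$ case, the paper's $\sum\Omega(D)\leq 2m'-2$ step). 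The trade-off: the paper's argument is shorter and avoids describing the intermediate multisets, while yours yields more structural information about the $A_i$ (the phase description), at the cost of a longer case analysis that would need to be written out in full.
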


\begin{proof}
Let $m=\max(D)$ and $m'=\max(\Omega(D))$. As in Definition~\ref{def:Omega}, let $(a_1,\ldots,a_s)$ be the decrement sequence of $D$, and let $A_0,\ldots,A_m$ be multisets such that $A_0=D\setminus\{m\}$, $A_m=\Omega(D)$, and $A_{i}$ is obtained from $A_{i-1}$ by an $a_i$-decrement for $i \in \{1,\ldots,m\}$.

First suppose that $\O(D) = \{0,\dots,0\}$. Then, by the definition of $\Omega$, either $\sum A_0 < m+2k$ or $\max(A_0)<k$. In the former case, every reduction of $D$ will be a degree sequence with sum less than $2k$ and hence will have maximum element less than $k$. In the latter case, every reduction of $D$ will clearly have maximum element less than $k$. Thus (i) is proved.

Now suppose that $\O(D) \neq \{0,\dots,0\}$. Then $\sum A_0 \geq m+2k$ and $\max(A_0) \geq k$. So $\sum \Omega(D) =  \sum A_0 - m \geq 2k$.  We first show that $m' \geq k$. Suppose for a contradiction that $m' < k$. Let $j \in \{1,2,\dots, m-1\}$ be such that $\max(A_j)
\geq k$ and $\max(A_{j+1}) < k$. Then, by the definition of a decrement sequence, a single element of $A_j$ is equal to $k$ and each other element of $A_j$ is equal to $0$. This contradicts $\sum A_{j} \geq \sum \O(D) \geq 2k$.

We next show that $\Omega(D)$ is a degree sequence.  Because $D$ is a degree sequence, $\sum D$ is even and hence $\sum \Omega(D) = \sum D - 2m$ is also even, so it remains to show that $\sum \O(D) \geq 2m'$.  Suppose otherwise for a contradiction that $\sum \O(D) < 2m'$. Then $2k \leq \sum \O(D) \leq 2m'-2$ because $\sum \Omega(D)$ is even. Hence $m' \geq k+1$ and $\max(\O(D)\setminus \{m'\}) \leq m'-2$. Thus, by the definition of a decrement sequence, $a_i > m'$ for $i \in
\{1,\ldots,m\}$ and hence $\max(A_0) = m'+m$. This contradicts $\max(A_0) \leq m$.

Finally let $G'$ be a multigraph with degree sequence $\Omega(D)$. We show that a multigraph $G$ with the claimed property exists. Let $H_m$ be the multigraph obtained from $G'$ by adding a new isolated vertex $u$ and note that $G'$ has degree sequence $A_m \uplus \{0\}$. We now inductively define multigraphs $H_m,H_{m-1},\ldots,H_0$ such that, for each $i \in \{m-1,m-2,\ldots,0\}$, $H_i$ is the multigraph with degree sequence $A_i \uplus \{m-i\}$ that is obtained from $H_{i+1}$ by adding an edge joining vertex $u$ and a vertex of degree $a_{i+1} -1$. (Note that $A_{i+1}$ contains an element equal to $a_{i+1} -1$ because it is obtained from $A_{i}$ by an $a_{i+1}$-decrement.) Let $G = H_0$.  Then $G$ has degree sequence $D$, $u$ is a vertex of maximum degree in $G$, and $G'=G-u$.
\end{proof}

\begin{example}\label{ex:build_graph}
Recall Example~\ref{ex:Omega} with $k=3$ and $D=\{1,2,2,4,4,5,6\}$.  Figure \ref{fig:build_graph2} shows four multigraphs whose degree sequences are, from left to right, $\Omega^3(D) = \{0,0,0,0\}$, $\Omega^2(D) = \{0,0,0,3,3\}$, $\Omega(D) = \{0,1,2,3,3,3\}$, and $D$. Each of the three leftmost multigraphs can be obtained from the multigraph to its immediate right by deleting some vertex of maximum degree.
\end{example}

\begin{figure}[h!]
\centering
 \includegraphics[width=0.9\linewidth]{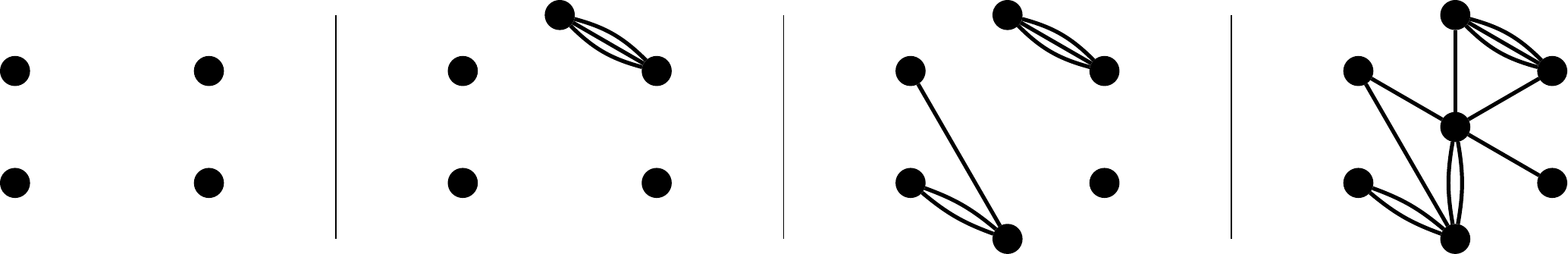}
\caption{Multigraphs with degree sequences $\Omega^3(D)$, $\Omega^2(D)$, $\Omega(D)$ and $D$ from Example~\ref{ex:build_graph}}
\label{fig:build_graph2}
\end{figure}

\begin{lemma}\label{lm:realisable}
Let $D$ be a degree sequence. There exists a multigraph $G$ such that some application of MAX to $G$ will result in a $k$-independent set of cardinality exactly $b(D)$.
\end{lemma}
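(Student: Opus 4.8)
The plan is to construct $G$ by working backwards along the chain $D=\Omega^0(D),\Omega^1(D),\ldots,\Omega^p(D)$, where $n$ is the order of $D$ and $p$ is the least nonnegative integer for which $\Omega^p(D)$ is trivial, so that $b(D)=n-p$. Concretely, I aim to produce multigraphs $G_0,G_1,\ldots,G_{p-1}$, with $G_i$ having degree sequence $\Omega^i(D)$, such that for each $i\in\{0,\ldots,p-2\}$ some vertex of maximum degree of $G_i$ can be deleted to obtain $G_{i+1}$, and such that some vertex of maximum degree of $G_{p-1}$ can be deleted to obtain a multigraph whose degree sequence is trivial. Granting this, $G=G_0$ has the required property: we run MAX on $G$ and, at each step $i=0,\ldots,p-2$, choose to delete the maximum-degree vertex that takes $G_i$ to $G_{i+1}$ (legitimate since $\Omega^i(D)$ is nontrivial for $i<p$, so MAX has not yet halted), then delete a maximum-degree vertex of $G_{p-1}$ to reach a multigraph of maximum degree less than $k$; at that point MAX halts and returns a $k$-independent set of cardinality $n-p=b(D)$.

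The case $p=0$ is immediate: then $D$ is trivial, so any multigraph $G$ with degree sequence $D$ has $\Delta(G)<k$ and MAX returns all $n=b(D)$ of its vertices. So assume $p\geq 1$; then $D$ is nontrivial, and hence so is $\Omega^i(D)$ for every $i\in\{0,\ldots,p-1\}$. I first pin down $\Omega^p(D)$. Applying Lemma~\ref{lm:properties of Omega(D)} to the nontrivial degree sequence $\Omega^{p-1}(D)$, we see that either $\Omega^p(D)=\{0,\ldots,0\}$ or $\Omega^p(D)$ is a nontrivial degree sequence; since $\Omega^p(D)$ is trivial, it must be that $\Omega^p(D)=\{0,\ldots,0\}$.

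Now I build the chain. Take $G_{p-1}$ to be any multigraph with degree sequence $\Omega^{p-1}(D)$, which exists as this is a (nontrivial) degree sequence. Since $\Omega(\Omega^{p-1}(D))=\Omega^p(D)=\{0,\ldots,0\}$, Lemma~\ref{lm:properties of Omega(D)}(i) applied to $\Omega^{p-1}(D)$ tells us that every reduction of $\Omega^{p-1}(D)$ is trivial; as $\Omega^{p-1}(D)$ is nontrivial, $G_{p-1}$ has a maximum-degree vertex whose deletion yields a multigraph with trivial degree sequence, as needed for the final step. Then, if $p\geq 2$, I define $G_{p-2},G_{p-3},\ldots,G_0$ in turn: given $G_{i+1}$ with degree sequence $\Omega^{i+1}(D)$, I apply Lemma~\ref{lm:properties of Omega(D)}(ii) to $\Omega^i(D)$ — valid because $\Omega^i(D)$ is nontrivial and $\Omega(\Omega^i(D))=\Omega^{i+1}(D)$ is nontrivial, hence not equal to $\{0,\ldots,0\}$ — to obtain a multigraph $G_i$ with degree sequence $\Omega^i(D)$ such that deleting some maximum-degree vertex of $G_i$ results in $G_{i+1}$. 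Setting $G=G_0$ and reasoning as in the first paragraph finishes the proof.

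There is no single hard step here: essentially all of the content has been absorbed into Lemma~\ref{lm:properties of Omega(D)}. The only point requiring care is the bookkeeping at the two ends of the chain — using part~(i) of that lemma (rather than part~(ii)) for the final deletion out of $G_{p-1}$, since $\Omega^p(D)=\{0,\ldots,0\}$ puts that step outside the scope of part~(ii), and separately treating $p=0$ (and noting the inductive middle portion is vacuous when $p=1$).
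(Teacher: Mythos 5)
Your proposal is correct and is essentially the paper's argument: the paper proves the lemma by induction on the order of $D$, lifting a realising multigraph for $\Omega(D)$ to one for $D$ via Lemma~\ref{lm:properties of Omega(D)}(ii) and handling the case where $\Omega(D)$ is trivial via part~(i), which is exactly your backward construction of the chain $G_0,\ldots,G_{p-1}$ written out iteratively instead of inductively. The bookkeeping you highlight (forcing $\Omega^p(D)=\{0,\ldots,0\}$ and using part~(i) for the last deletion) matches the paper's treatment of its base cases.
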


\begin{proof}
Say $n$ is the order of $D$. Note that if $D$ is trivial, $b(D)=n$ and the result holds. Also, if $D$ is nontrivial but $\Omega(D)$ is trivial, then every reduction of $D$ is trivial by Lemma~\ref{lm:properties of Omega(D)}, so $b(D)=n-1$ and again the result can be seen to hold.  Thus, we assume that $\Omega(D)$ is nontrivial.

Suppose by induction that the result is true for all degree sequences of order less than $n$. By our inductive hypothesis, there exists a multigraph $G'$ with degree sequence $\Omega(D)$ such that some application of MAX to $G'$ results in a $k$-independent set of cardinality exactly $b(\Omega(D))$. By Lemma~\ref{lm:properties of Omega(D)}(ii), there is a multigraph $G$ with degree sequence $D$ such that deleting some vertex of maximum degree in $G$ results in $G'$. Then some application of MAX to $G$ produces a $k$-independent set of cardinality $b(\Omega(D))$ and from the definition of $b(D)$ we have that $b(D) = b(\Omega(D))$.
\end{proof}

The remainder of Theorem \ref{kIndepBound} is proved in Section \ref{s:mainproof}.

\section{Proof of Theorem \ref{kIndepBound} \label{s:mainproof}}

The goal of this section is to complete the proof of Theorem \ref{kIndepBound}.  The key idea of the proof involves a partial order $\preceq$ which we define on the set of all degree sequences of a given order.  The significance of this partial order stems from the fact that, if $D$ and $E$ are degree sequences with $D \preceq E$, then $b(D) \leq b(E)$. We will establish this fact in Lemma \ref{bbound}.

We require some further notation in order to define the partial order $\preceq$. Similar to the definition of an $x$-decrement, we now define an $x$-increment.

\begin{definition}[{\bf $\bm{x}$-increment}]
Let $E$ be a multiset of nonnegative integers and let $x$ be a positive element of $E$. We say that $D$ is obtained from $E$ by an {\it $x$-increment} if $D=(E \setminus \{x\}) \uplus \{x+1\}$.
\end{definition}

\begin{definition}[{\bf elementary step}]
Let $D$ and $E$ be multisets such that for some positive integers $x$ and $y$
\begin{itemize}
    \item[(i)]
$D$ is obtained from $E$ by first performing an $(x-1)$-increment and then performing a $(y-1)$-increment where $x \leq y \leq \max(E)+1$; or
    \item[(ii)]
$D$ is obtained from $E$ by first performing an $x$-decrement and then performing a $(y-1)$-increment where either $x > \max(k,y)$ or $x < y \leq k$.
\end{itemize}
We say that $D$ is obtained from $E$ by an \emph{elementary step}.  We call a
step of type (i) an \emph{$(x,y)$-addition step} and a step of type (ii) an
\emph{$(x,y)$-transfer step}.
\end{definition}

\begin{example}\label{ex:elementary}
Suppose that $k=3$, $E=\{1,2,2,4,4,5,6\}$ and $E^*=\{0,1,2,3,3,3\}$. Then the result of applying a $(3,7)$-addition step to $E$ is $D=\{1,2,3,4,4,5,7\}$ and the result of applying a $(1,3)$-transfer step to $E'$ is $D'=\{0,0,3,3,3,3\}$. Figure~\ref{fig:elementary} shows Ferrers diagrams for $E$, $E'$, $D$ and $D'$.  The addition step by which $D$ is obtained from $E$ is indicated by shaded boxes and the transfer step by which $D'$ is obtained from $E'$ is indicated by a dashed box and a shaded box. \end{example}

\begin{figure}[h!]
\centering
 \includegraphics[scale=0.5]{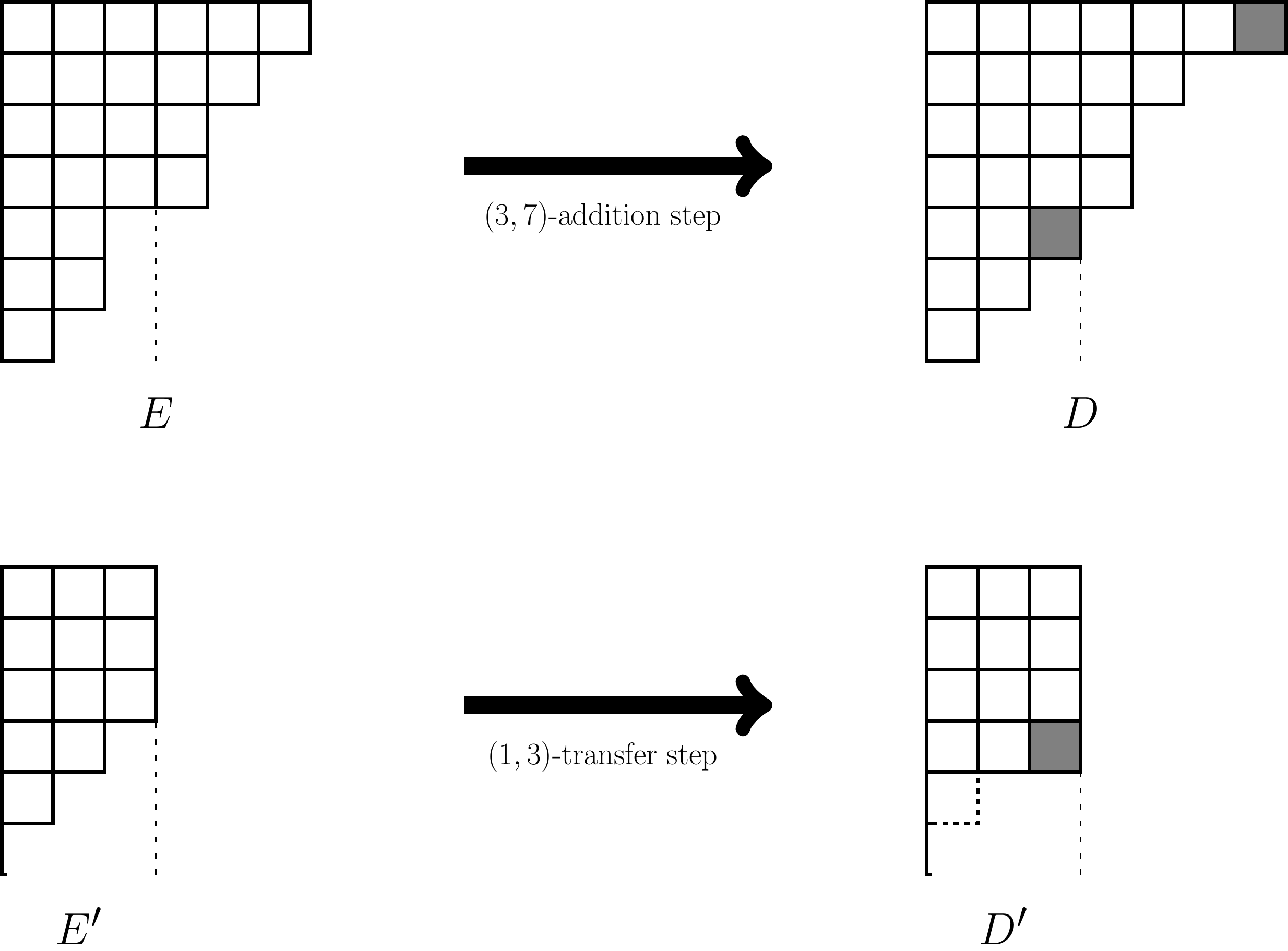}
\caption{Ferrers diagrams for $E$, $E'$, $D$ and $D'$ from Example~\ref{ex:elementary}}
\label{fig:elementary}
\end{figure}

\begin{definition}[\textbf{$\preceq$}] Let $D$ and $E$ be multisets with $n$ elements.  We say $D \prec E$ if $D$ can be obtained from $E$ by a (nontrivial) sequence of elementary steps,  and we say $D \preceq E$ if $D=E$ or $D \prec E$. It can be seen that $\preceq$ is a partial order on the set of multisets with $n$ elements.
\end{definition}

We now observe that elementary steps preserve the property of being a nontrivial degree sequence.

\begin{lemma}\label{lem:remain deg seq}
If $E$ is a nontrivial degree sequence and $D$ is a multiset such that $D \preceq E$, then $D$ is a nontrivial degree sequence.
\end{lemma}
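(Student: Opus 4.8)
The plan is to reduce to the case of a single elementary step. If $D \preceq E$ then either $D = E$, in which case the claim is trivial, or $D \prec E$ and there is a chain $E = F_0, F_1, \ldots, F_t = D$ where each $F_{i}$ is obtained from $F_{i-1}$ by one elementary step. So it suffices to prove: if $E$ is a nontrivial degree sequence and $D$ is obtained from $E$ by a single elementary step, then $D$ is a nontrivial degree sequence. (One then iterates, noting that nontriviality of each intermediate $F_i$ is exactly what keeps the argument going.)

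For the single-step claim I would first dispose of nontriviality of $D$. Both an $x$-increment and an $x$-decrement with $x$ positive keep the multiset's maximum at least as large as the second-largest element of $E$; more carefully, in an $(x,y)$-addition step one part only grows, so $\max(D) \geq \max(E) \geq k$. In an $(x,y)$-transfer step, the only part that decreases is the one going from $x$ to $x-1$; since the side conditions force either $x > \max(k,y)$ (so even after the decrement this part is $\geq k$, giving $\max(D) \geq k$) or $x < y \leq k$ (in which case it is the $(y-1)$-increment that matters, but here one must check that $E$ has some part $\geq k$ surviving — this follows because the decremented part has value $x < k$, so it was never the witness to nontriviality of $E$, and that witness is untouched or increased). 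So $\max(D) \geq k$ in every case.

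The main content is showing $D$ is a degree sequence, i.e. that $\sum D$ is even and $\sum D \geq 2\max(D)$. Parity is immediate: an addition step changes $\sum$ by $2$ and a transfer step changes it by $0$, so $\sum D \equiv \sum E \pmod 2$, and $\sum E$ is even. For the inequality $\sum D \geq 2\max(D)$ I would split into the two step types and track how both sides move. For a transfer step $\sum D = \sum E$, so the only danger is that $\max(D) > \max(E)$; this happens only when the $(y-1)$-increment creates a new maximum, i.e. $y - 1 = \max(E) + $ something, and one uses the side condition ($x > \max(k,y)$ forces $y \leq x - 1 \leq \max(E)$, so actually $y - 1 \leq \max(E) - 1 < \max(E)$ and the max cannot increase; the case $x < y \leq k$ needs $\max(E) \geq k \geq y > y-1$, so again no new max unless $\max(E) = y - 1$, a borderline case to check directly). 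For an addition step $\sum D = \sum E + 2$ while $\max$ increases by at most $2$ (and by at most $1$ if the two increments hit different parts), so one needs $\sum E \geq 2\max(E)$ — which holds since $E$ is a degree sequence — together with the constraint $x \leq y \leq \max(E) + 1$ governing which parts are hit; the worst case is both increments landing on the (unique) maximum part, giving $\max(D) = \max(E) + 2$ and $\sum D = \sum E + 2$, and here one checks that $E$ having only one part equal to $\max(E)$ together with $\sum E$ even and $\geq 2\max(E)$ actually forces $\sum E \geq 2\max(E) + 2$, so $\sum D \geq 2\max(D)$ still.

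I expect the bookkeeping around exactly which parts the increments and decrements act on — and the borderline cases where $\max$ jumps by the full amount allowed — to be the main obstacle; everything else (parity, and nontriviality) is essentially immediate from the definitions. The key structural observation making the argument work is that the side conditions in the definition of an elementary step are precisely calibrated so that a part can only be pushed above $\max(E)$ by an amount compensated by the simultaneous increase in $\sum$, and a part can only be pulled below $k$ when it was already small (below $k$) to begin with.
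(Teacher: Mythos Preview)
Your approach is the same as the paper's: reduce to a single elementary step by transitivity, then check that $\max(D)\geq k$, that $\sum D$ is even, and that $\sum D - 2\max(D) \geq \sum E - 2\max(E)$.

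There is one genuine error in your addition-step analysis. You posit a ``worst case'' in which both increments land on the unique maximum part and give $\max(D)=\max(E)+2$, and you then claim that a unique maximum together with $\sum E$ even and $\sum E\geq 2\max(E)$ forces $\sum E\geq 2\max(E)+2$. That claim is false (take $E=\{2,1,1\}$, or indeed any path $P_n$). Fortunately the case you are trying to handle cannot occur: in an $(x,y)$-addition step the constraint is $x\leq y\leq \max(E)+1$, so each increment produces a value at most $\max(E)+1$; if the two increments hit the same element then the second needs $y-1$ equal to the post-first-increment value $x$, whence $y=x+1\leq \max(E)+1$. Thus $\max(D)\leq\max(E)+1$ in every addition step, and your phantom case never arises. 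The paper's proof simply records this bound for addition steps, and $\max(D)\leq\max(E)$ for transfer steps (since there $y\leq\max(k,x-1)\leq\max(E)$), from which $\sum D-2\max(D)\geq\sum E-2\max(E)$ is immediate. Your ``borderline case to check directly'' in the transfer step with $x<y\leq k$ is likewise vacuous, since $\max(E)\geq k>y-1$ already rules out $\max(E)=y-1$.
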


\begin{proof}
By the transitivity of $\preceq$ it suffices to consider the case where $D$ is obtained from $E$ by an elementary step. Since $\max(E) \geq k$ and no elementary step involves performing a
$k$-decrement, $\max(D) \geq k$. Because $E$ is a degree sequence, $\sum E$ is even and $\sum E - 2\max(E) \geq 0$. It suffices to show that $\sum D$ is even and $\sum D - 2\max(D) \geq \sum E - 2\max(E)$ to establish that $D$ is a degree sequence and complete the proof.

If $D$ is obtained from $E$ by an $(x,y)$-addition step for some $x$ and $y$ then $\sum D = \sum E+2$ and, because $x,y \leq \max(E)+1$, $\max(D) \leq \max(E)+1$. If $D$ is obtained from $E$ by an $(x,y)$-transfer step for some $x$ and $y$, then $\sum D = \sum E$ and, because $y \leq \max(k,x-1)$, $\max(D) \leq \max(E)$. So in either case $\sum D$ is even and $\sum D - 2\max(D) \geq \sum E - 2\max(E)$.
\end{proof}

In order to prove Theorem~\ref{kIndepBound} we will require Lemma \ref{lem:main}  concerning the properties of reductions of degree sequences under the $\preceq$ order.  Since the proof of Lemma \ref{lem:main} requires additional concepts and notation, we defer it to Section \ref{s:lemmaproofs}.

\begin{lemma}\label{lem:main}
Let $D$ and $E$ be degree sequences such that $D \preceq E$, and let $E'$ be a nontrivial reduction of $E$. Then $\Omega(D) \preceq E'$.
\end{lemma}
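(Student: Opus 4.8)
The plan is to prove the statement by induction on the number of elementary steps connecting $D$ to $E$, so that the core of the argument reduces to the case where $D$ is obtained from $E$ by a single elementary step. For the base case $D = E$ there is nothing to prove beyond what Definition~\ref{def:Omega} already tells us, and for the inductive step we would write $D \preceq D^* \prec E$ where $D^*$ is obtained from $E$ by one elementary step; applying the single-step case to $D^*$ and $E$ gives $\Omega(D^*) \preceq E'$, and then applying the inductive hypothesis to $D \preceq D^*$ together with the fact that $\Omega(D^*)$ is a nontrivial reduction of $D^*$ (which needs Lemma~\ref{lm:properties of Omega(D)} and Lemma~\ref{lem:remain deg seq}) yields $\Omega(D) \preceq \Omega(D^*) \preceq E'$. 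So everything comes down to the one-step case.

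For the one-step case, the natural strategy is to compare the decrement sequence of $D$ with that of $E$ and to track, step by step through Definition~\ref{def:Omega}, how the single $(x,y)$-addition or $(x,y)$-transfer step that relates $E$ to $D$ propagates through the construction of $\Omega$. The point is that $\Omega(E) = A_{\max(E)}$ is built by a very rigid greedy process: peel off the top part, then repeatedly decrement the current maximum while it exceeds $k$, otherwise decrement the smallest positive part. One would want to argue that the extra box (or the transferred box) that distinguishes $D$ from $E$ can be ``carried along'' as the decrement process runs, so that after $\max(D)$ decrements one is left with a multiset $\Omega(D)$ that sits below $E'$ in the $\preceq$ order — intuitively because $E'$ is an arbitrary nontrivial reduction of $E$, while $\Omega(D)$ is a specific, ``minimal'' reduction, and the one-step perturbation can only help. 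It will likely be cleanest to phrase this in terms of the Ferrers-diagram picture: an addition step adds two boxes high up, a transfer step moves one box from high to low, and in both cases one checks that the resulting $\Omega(D)$ is reachable from $E'$ by a sequence of addition and transfer steps.

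The main obstacle, I expect, is exactly this one-step analysis: verifying that $\Omega(D) \preceq E'$ requires understanding not just $\Omega(E)$ but an \emph{arbitrary} nontrivial reduction $E'$ of $E$, and reductions of $E$ form a fairly rich family (any vertex of maximum degree can be deleted, and the resulting degree sequence depends on where its edges went). So one cannot simply compare $\Omega(D)$ with $\Omega(E)$; one must show $\Omega(D) \preceq E'$ for every such $E'$. This is presumably why the authors defer the proof and introduce ``additional concepts and notation'' in Section~\ref{s:lemmaproofs} — I would anticipate needing an auxiliary combinatorial object that simultaneously encodes ``what $\Omega$ does'' and ``what an arbitrary reduction can do,'' perhaps a canonical way of writing any reduction $E'$ of $E$ as the result of applying the $\Omega$-type greedy process with a modified (non-greedy) choice of decrements, together with a lemma saying that switching from the greedy choice to any other legal choice only moves you up in the $\preceq$ order. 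Establishing that structural comparison, and checking it is stable under the single perturbation distinguishing $D$ from $E$, is where the real work lies; the induction wrapping it is routine.
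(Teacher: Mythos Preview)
Your induction framework is fine, but you have misidentified where the work lies. The base case $D=E$ is \emph{not} trivial: it asserts precisely that $\Omega(E)\preceq E'$ for every nontrivial reduction $E'$ of $E$, and nothing in Definition~\ref{def:Omega} gives this for free. This is in fact one of the two substantial lemmas the paper proves (Lemma~\ref{lem:Omega_vs_other}), and its proof requires an induction over a larger class of ``pseudo-reductions'' together with a careful analysis of the greedy decrement process. Your final paragraph even anticipates needing exactly this statement (``switching from the greedy choice to any other legal choice only moves you up in the $\preceq$ order''), yet you have already declared it done in the base case.

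Once you recognise that $\Omega(E)\preceq E'$ is a genuine lemma, the one-step case you worry about collapses: you no longer need $\Omega(D)\preceq E'$ for an \emph{arbitrary} reduction $E'$, only $\Omega(D)\preceq\Omega(E)$, since transitivity then gives $\Omega(D)\preceq\Omega(E)\preceq E'$. This is exactly the paper's decomposition: Lemma~\ref{lem:Omega_vs_other} handles the arbitrary reduction once and for all (with no perturbation), and Lemma~\ref{lem:OmegaStep} handles the elementary-step perturbation comparing only the two canonical objects $\Omega(D)$ and $\Omega(E)$ via the decrement-sequence bookkeeping you sketch. Your proposal entangles these two issues into a single ``one-step case against arbitrary $E'$'', which is why it looks harder than it needs to be; separating them is the missing idea.
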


\begin{lemma} \label{bbound}
Let $D$ and $E$ be degree sequences. If $D \preceq E$ then $b(D) \leq b(E)$.
\end{lemma}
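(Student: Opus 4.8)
The plan is to prove Lemma~\ref{bbound} by induction on $b(E)$, using Lemma~\ref{lem:main} as the engine that lets the induction step go through. The base cases will handle trivial degree sequences directly: if $E$ is trivial then, since $D \preceq E$ and elementary steps preserve nontriviality by Lemma~\ref{lem:remain deg seq} (applied in the contrapositive), we must have $D = E$, so $b(D) = b(E)$ and there is nothing to prove. Similarly, it is convenient to dispose separately of the case $\Omega(E) = \{0,\ldots,0\}$: then $b(E) = n-1$ (where $n$ is the common order), and since $b(D) \leq |D| = n$ always and $b(D) = n$ would force $D$ trivial hence $D = E$ contradicting nontriviality of $E$ unless $E$ were trivial too, we get $b(D) \leq n-1 = b(E)$. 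Wait — I should be more careful here: if $D$ is trivial then $b(D)=n$ but then $D=E$ forces $E$ trivial, a case already handled; so in the remaining situation $D$ is nontrivial, hence $b(D) \le n-1 = b(E)$.

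The heart of the argument is the inductive step, where both $D$ and $E$ are nontrivial and $\Omega(E) \neq \{0,\ldots,0\}$. By Lemma~\ref{lm:properties of Omega(D)}(ii), $\Omega(E)$ is a nontrivial reduction of $E$, so I can apply Lemma~\ref{lem:main} with $E' = \Omega(E)$ to conclude $\Omega(D) \preceq \Omega(E)$. Now $\Omega(D)$ and $\Omega(E)$ are degree sequences of order $n-1$ (by Lemma~\ref{lm:properties of Omega(D)}, $\Omega(E)$ is a degree sequence; and $\Omega(D)$ is one because either $D$ is nontrivial with $\Omega(D)$ a degree sequence by the same lemma, or — if $\Omega(D) = \{0,\ldots,0\}$ — it is trivially a degree sequence). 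Since $b$ strictly decreases the order by the number of $\Omega$-iterations needed to reach triviality, and $\Omega(E)$ reaches triviality in one fewer step than $E$, we have $b(\Omega(E)) = b(E)$ if $b(E) = n-1$... no: precisely, $b(E) = b(\Omega(E))$ whenever $E$ is nontrivial, directly from the definition of $b$ via $p \mapsto p-1$ on $\Omega(E)$. The same identity gives $b(D) = b(\Omega(D))$ since $D$ is nontrivial. So it remains to show $b(\Omega(D)) \leq b(\Omega(E))$, which is exactly the inductive hypothesis applied to the pair $\Omega(D) \preceq \Omega(E)$ — valid because $b(\Omega(E)) = b(E) - 1 < b(E)$. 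That closes the induction.

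One subtlety to check carefully: when I invoke the inductive hypothesis on $\Omega(D) \preceq \Omega(E)$, I need these to be genuine degree sequences (handled above) and I need the induction to be well-founded, which it is since I am inducting on $b(E) \in \N$ and $b(\Omega(E)) = b(E) - 1$. A second subtlety: the definition of $b$ gives $b(E) = b(\Omega(E))$ only when $\Omega(E)$ itself is trivial-reachable with the same terminal cardinality — but this is immediate, since if $p$ is least with $\Omega^p(E)$ trivial then (as $E$ is nontrivial, so $p \geq 1$) $p - 1$ is least with $\Omega^{p-1}(\Omega(E)) = \Omega^p(E)$ trivial, and $|\Omega^p(E)| = n - p = (n-1) - (p-1) = |\Omega^{p-1}(\Omega(E))|$. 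I expect the main obstacle to be purely bookkeeping: making sure every special case ($D$ trivial, $E$ trivial, $\Omega(E)$ or $\Omega(D)$ equal to the all-zeros sequence) is routed correctly, and that Lemma~\ref{lem:main} is applicable in the form needed. No genuinely hard estimate appears — all the real work has been offloaded into Lemma~\ref{lem:main}, whose proof is deferred to Section~\ref{s:lemmaproofs}.
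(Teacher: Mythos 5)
Your overall route is the same as the paper's: use Lemma~\ref{lm:properties of Omega(D)}(ii) to see that $\Omega(E)$ is a nontrivial reduction of $E$, apply Lemma~\ref{lem:main} with $E'=\Omega(E)$ to get $\Omega(D)\preceq\Omega(E)$, and descend, using $b(D)=b(\Omega(D))$ and $b(E)=b(\Omega(E))$. But the inductive scaffolding as written does not work: you induct on $b(E)$ and justify the appeal to the inductive hypothesis by asserting $b(\Omega(E))=b(E)-1<b(E)$, which contradicts the correct identity you yourself establish two sentences earlier, namely $b(\Omega(E))=b(E)$ for nontrivial $E$ (if $p$ is least with $\Omega^p(E)$ trivial, then $b(E)=n-p=(n-1)-(p-1)=b(\Omega(E))$). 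So the quantity you induct on does not decrease, and the induction is circular as stated. The repair is immediate and recovers the paper's argument: induct on the order $n$ (or on $p$), or simply iterate Lemma~\ref{lem:main} to get $\Omega^i(D)\preceq\Omega^i(E)$ for $i\in\{0,\ldots,p-1\}$, conclude from Lemma~\ref{lem:remain deg seq} that each $\Omega^i(D)$ is nontrivial, and hence $b(D)\leq n-p=b(E)$.

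A second, smaller error: it is not true that $E$ trivial and $D\preceq E$ force $D=E$. Elementary steps can be performed on trivial sequences; for instance with $k=3$, a $(3,3)$-addition step applied to $E=\{2,2\}$ yields $D=\{3,3\}\prec E$ with $D$ nontrivial. Lemma~\ref{lem:remain deg seq} propagates nontriviality from $E$ down to $D$; it says nothing when $E$ is trivial. This base case is harmless anyway, since $b(E)=n\geq b(D)$ holds automatically when $E$ is trivial, but the justification should be replaced. Likewise, in your $\Omega(E)=\{0,\ldots,0\}$ case the nontriviality of $D$ should be cited directly from Lemma~\ref{lem:remain deg seq} applied to the nontrivial $E$, rather than deduced via the false claim that triviality of $D$ would force $D=E$.
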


\begin{proof}
Let $n$ be the order of $D$ and $E$. We have $b(E)= n-p$ where $p$ is the smallest nonnegative integer such that $\Omega^p(E)$ is trivial. So by Lemma~\ref{lm:properties of Omega(D)}, $\Omega^{i+1}(E)$ is a nontrivial reduction of $\Omega^i(E)$ for each $i \in \{0,\ldots,p-2\}$. Thus, by iteratively applying Lemma \ref{lem:main}, we have that $\Omega^i(D) \preceq \Omega^i(E)$ for each $i \in \{0,\ldots,p-1\}$. So by Lemma~\ref{lem:remain deg seq}, $\Omega^i(D)$ is nontrivial for each $i \in \{0,\ldots,p-1\}$ and hence $b(D) \leq n-p$.
\end{proof}

\begin{proof}[{\bf Proof of Theorem \ref{kIndepBound}}]
Say $n$ is the order of $D$. We may assume that $D$ is nontrivial, for otherwise $b(D)=n$ and the result holds. Let $G$ be an arbitrary multigraph with degree sequence $D$. In view of Lemma~\ref{lm:realisable}, we only need show that any application of MAX to $G$ results in a $k$-independent set of cardinality at least $b(D)$. Suppose that some application of MAX to $G$ results in a $k$-independent set of cardinality $n-p$ for some $p \in \{0,\ldots,n\}$. It suffices to show that $b(D) \leq n-p$. Say that the sequence of multigraphs obtained through the application of MAX is $G_0,G_1,\ldots,G_p$ where $G_0=G$, $G_{i}$ has maximum degree at least $k$ for $i \in \{0,\ldots,p-1\}$, and $G_p$ has maximum degree at most $k-1$. For $i \in \{0,\ldots,p\}$, let $D_i$ be the degree sequence of $G_i$. Observe that $D_{i+1}$ is a nontrivial reduction of $D_i$ for each $i \in \{0,\ldots,p-2\}$. Thus, by iteratively applying Lemma \ref{lem:main}, we have that $\Omega^i(D) \preceq D_i$ for each $i \in \{1,\dots,p-1\}$. So by Lemma~\ref{lem:remain deg seq}, $\Omega^i(D)$ is nontrivial for each $i \in \{0,\ldots,p-1\}$ and hence $b(D) \leq n-p$.
\end{proof}

\section{Proof of Lemma \ref{lem:main} \label{s:lemmaproofs}}

In this section we prove Lemma \ref{lem:main} as an easy consequence of two more technical lemmas, namely Lemmas \ref{lem:Omega_vs_other} and \ref{lem:OmegaStep}.  First we define some further notation which will be useful in these proofs.

\begin{definition}[$\bm{\mu_D, \sigma_D}$] Let $D$ be a finite multiset of nonnegative integers.  Define $\mu_D: \N \rightarrow \N$ and $\sigma_D: \N \rightarrow \N$ so that $\mu_D(z)$ is the number of elements (possibly 0) of $D$ equal to $z$, and $\sigma_D(z)$ is the number of elements of $D$ that are at least $z$.
\end{definition}

By definition, $\sigma_D(0)=|D|$ and $\mu_D(z) = \sigma_D(z)-\sigma_D(z+1)$ for each $z \in \N$.  Note that $\sigma_D$ is nonincreasing. If $D$ is viewed as an integer partition of $\sum D$ (where we allow parts equal to 0), then $\{\sigma_D(i)\}_{i \in \N}$ is the conjugate partition and, in the Ferrers diagram of $D$, the elements of $\{\sigma_D(i)\}_{i \in \N}$ are given by the column depths.

\begin{example}\label{ex:sigma}
If $D=\{0,1,1,3,3\}$, then $\sigma_D(0) = 5$, $\sigma_D(1)=4$,
$\sigma_D(2)=2$, $ \sigma_D(3) = 2$, and $\sigma_D(x) = 0$ for $x\geq 4$. Figure~\ref{fig:sigma} shows a Ferrers diagram of $D$.
\end{example}

\begin{figure}[h!]
\centering
\includegraphics[scale=0.5]{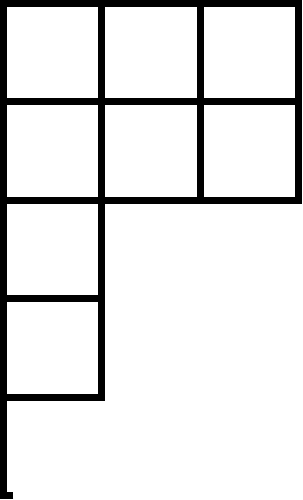}
\caption{Ferrers diagram for the degree sequence $D = \{0,1,1,3,3\}$ from Example~\ref{ex:sigma}}\label{fig:sigma}
\end{figure}

Given any nonincreasing function $f: \N \rightarrow \N$, there is exactly one finite multiset $D$ of nonnegative integers  such that $\sigma_D=f$ and so we can define a multiset $D$ by specifying $\sigma_D$. For each $x \in \N$ we define the indicator function $1_x
: \N \rightarrow \N$ by
\[1_x(z) =
\begin{cases}
0 & \mbox{if $z \neq x$;} \\
1 & \mbox{if $z = x$}.
\end{cases}
\]
Note that $D$ is obtained from $E$ by an $x$-decrement if $\sigma_{D}=\sigma_E-1_x$ and $D$ is obtained from $E$ by an $x$-increment if $\sigma_{D}=\sigma_E+1_{x+1}$.

\begin{lemma}\label{lem:Omega_vs_other}
Let $E$ be a degree sequence and let $E'$ be a nontrivial reduction of $E$.  Then $\Omega(E) \preceq E'$.
\end{lemma}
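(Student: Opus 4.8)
The plan is to show directly that the specific nontrivial reduction $\Omega(E)$ can be reached from any nontrivial reduction $E'$ of $E$ by a sequence of elementary steps. Since both $\Omega(E)$ and $E'$ are obtained from $A_0 = E \setminus \{\max(E)\}$ by removing a total "weight" of $\max(E)$ via decrements (indeed $\sum \Omega(E) = \sum E' = \sum A_0 - \max(E)$ and both have order $n-1$), the two degree sequences differ only in \emph{where} those decrements were applied to $A_0$. So the task reduces to a combinatorial comparison between the greedy decrement pattern defining $\Omega$ (take from the largest part while it exceeds $k$, otherwise take from the smallest positive part) and an arbitrary way of decrementing $A_0$ by total weight $\max(E)$ that yields a degree sequence.

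The key steps, in order, would be: (1) Fix a multigraph $G$ realising $E$ and a maximum-degree vertex $v$ with $G - v$ having degree sequence $E'$; record that $E'$ is obtained from $A_0$ by decrementing, for each neighbour of $v$ counted with multiplicity, one of its parts — so $E'$ arises from $A_0$ by $\max(E)$ single-unit decrements, one per edge at $v$. (2) Translate everything into the $\sigma$-language introduced just before the lemma: $\sigma_{A_0} - \sigma_{\Omega(E)}$ and $\sigma_{A_0} - \sigma_{E'}$ are both nonnegative integer combinations of indicator functions $1_x$ with total mass $\max(E)$, and the defining property of $\Omega$ makes $\sigma_{\Omega(E)}$ "as left-justified/top-heavy as possible" subject to being a valid nontrivial degree sequence — i.e. it removes columns from the far right (largest parts) first, and only dips below level $k$ in a single column once forced to. (3) Show that moving from $E'$ towards $\Omega(E)$ can be accomplished one elementary step at a time: whenever $E' \neq \Omega(E)$, locate a "discrepancy" — a place where $E'$ has a part that $\Omega(E)$ does not, compensated elsewhere — and realise the local fix as either an $(x,y)$-addition step (when $\Omega(E)$'s profile lies "above" $E'$'s in two coordinates) or an $(x,y)$-transfer step (when mass must be shifted from a larger part down to a part of size $\le k$, or between parts both exceeding $\max(k,\cdot)$). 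The side conditions $x \le y \le \max(E)+1$ for additions and $x > \max(k,y)$ or $x < y \le k$ for transfers are exactly what the $\Omega$-construction guarantees, because $\Omega$ never decrements a part of size exactly $k$ and never decrements a small part while a part exceeding $k$ is available.

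The main obstacle I expect is step (3): setting up a clean \emph{potential/measure of discrepancy} between $E'$ and $\Omega(E)$ that provably strictly decreases under one well-chosen elementary step, while simultaneously verifying that the chosen step's side conditions are met. The subtlety is that an arbitrary reduction $E'$ can be "wrong" in two qualitatively different ways relative to $\Omega(E)$ — it can have taken from a small part when it should have taken from a large one (calling for a transfer step to move the deficit), or it can have spread its decrements more evenly than the top-heavy $\Omega(E)$ profile (calling for an addition step that simultaneously raises two entries of $E'$). I anticipate the actual paper handles this by the two auxiliary lemmas it announces — one (Lemma~\ref{lem:Omega_vs_other}-flavoured, comparing $\Omega$ of a sequence to an arbitrary reduction) pinned down via the $\sigma$-function inequalities, and another (an "$\Omega$-step" lemma) showing $\Omega$ is monotone under $\preceq$ — so my plan would be to isolate the single-step reachability claim as its own sublemma, prove it by downward induction on $\sum_{z} |\sigma_{E'}(z) - \sigma_{\Omega(E)}(z)|$ (or on the largest coordinate of disagreement), and in each inductive step do a short case analysis on whether the topmost disagreement requires an addition or a transfer, checking the inequality constraints directly from the definition of the decrement sequence.
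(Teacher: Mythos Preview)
Your proposal is on the right track and aligns with the paper's approach, but there are two concrete issues worth flagging.

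First, a conceptual slip: since $\sum E' = \sum \Omega(E)$ (both equal $\sum A_0 - \max(E)$), and addition steps increase the sum by $2$ while transfer steps preserve it, any sequence of elementary steps taking $E'$ to $\Omega(E)$ must consist \emph{entirely of transfer steps}. Your step (3), which talks about sometimes needing an addition step ``when $\Omega(E)$'s profile lies above $E'$'s in two coordinates'', cannot arise. The paper's proof accordingly uses only transfer steps: it picks $x$ to be the largest integer with $\sigma_{E'}(x) > \sigma_{\Omega(E)}(x)$ and $y$ the smallest with $\sigma_{E'}(y) < \sigma_{\Omega(E)}(y)$, then verifies that an $(x,y)$-transfer step is legal by analysing the decrement sequence of $E$ at the last index where an $x$-decrement occurred.

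Second, you do not address what class the intermediate sequences $E''$ belong to. After a transfer step, $E''$ need not be a reduction of $E$ in the original sense, so an induction hypothesis phrased for reductions would not apply. The paper handles this by enlarging the class to \emph{pseudo-reductions} (degree sequences $E'$ with the correct sum and $\sigma_{E'}(z) \leq \sigma_{A_0}(z)$ for all $z \geq 1$), shows the chosen transfer step keeps $E''$ inside this class, and runs the induction there. Your discrepancy-based induction on $\sum_z |\sigma_{E'}(z) - \sigma_{\Omega(E)}(z)|$ could work, but the verification that the transfer step is legal (claim (i) in particular) uses $\sigma_{A_0}(x) > \sigma_{\Omega(E)}(x)$, which follows from $\sigma_{E'}(x) \leq \sigma_{A_0}(x)$ --- i.e.\ from pseudo-reduction-ness --- so you would still need to track this condition through the induction.
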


\begin{proof}
Fix a degree sequence $E$ and let $A_0 = E \setminus \{\max(E)\}$. Call any degree sequence $E'$ such that $\sum E'= \sum A_0 - \max(E)$ and  $\sigma_{E'}(z) \leq \sigma_{A_0}(z)$ for each positive integer $z$ a \emph{pseudo-reduction} of $E$. Clearly every reduction of $E$ is also a pseudo-reduction. We will in fact prove the lemma for any nontrivial pseudo-reduction $E'$ of $E$. It is clear that $E$ has only finitely many pseudo-reductions.

Let $E'$ be a pseudo-reduction of $E$. If $E' = \Omega(E)$, then the result is immediate. So suppose that $E' \neq \Omega(E)$ and, by induction, that $\Omega(E) \preceq E''$ for any pseudo-reduction $E''$ of $E$ such that $E'' \prec E'$. Let $x$ be the largest positive integer such that $\sigma_{E'}(x)>\sigma_{\Omega(E)}(x)$ and let $y$ be the smallest positive integer
such that $\sigma_{E'}(y)<\sigma_{\Omega(E)}(y)$. Such integers exist because $E'
\neq \Omega(E)$ and $\sum E'= \sum \Omega(E)$. Note that $x \neq y$.

We will show that we can perform an $(x,y)$-transfer step to $E'$ to obtain a degree sequence $E''$, say. Then $E'' \prec E'$ obviously and $E''$ will be a pseudo-reduction of $E$ because $\sigma_{E''}(y) \leq \sigma_{\Omega(E)}(y) \leq \sigma_{A_0}(y)$, and  $\sigma_{E''}(z) \leq
\sigma_{E'}(z) \leq \sigma_{A_0}(z)$ for each positive integer $z \neq y$. So
$\Omega(E) \preceq E''$ by our inductive hypothesis and the result will follow by the transitivity of $\preceq$.

It only remains to show that we can perform an $(x,y)$-transfer step on $E'$.
Let $A_0,\ldots,A_{\max(E)}$ be the sequence of multisets as given in Definition \ref{def:Omega}, where $A_{\max(E)}=\Omega(E)$. Let $i$ be the element of $\{0,\ldots,\max(E)-1\}$ such that $\sigma_{A_{i+1}}(x)=\sigma_{\Omega(E)}(x)$ but $\sigma_{A_{i}}(x)=\sigma_{\Omega(E)}(x)+1$. Note $i$ exists because $\sigma_E(x) \geq \sigma_{E'}(x) > \sigma_{\Omega(E)}(x)$. Obviously, $A_{i+1}$ is obtained from $A_{i}$ by an $x$-decrement. Furthermore, by the definition of a decrement sequence, $A_i$ contains no element greater than $\max(k,x)$ and, if $x \leq k$, $A_i$ has exactly $\sigma_{\Omega(E)}(x)+1$ positive elements. We make and prove three claims.

\begin{itemize}
    \item[(i)]
\textbf{$\bm{y < x}$ if $\bm{x>k}$, and $\bm{y \leq k}$ if $\bm{x \leq k}$.} Because $A_i$ contains no element greater than $\max(k,x)$, nor does $\Omega(E)$. So $\sigma_{\Omega(E)}(z)=0$ for each $z > \max(k,x)$ and (i) follows from the definition of $y$.
    \item[(ii)]
\textbf{$\bm{x < y}$ if $\bm{x \leq k}$.} Suppose for a contradiction that $y < x \leq k$. Because $x \leq k$, $A_i$ has exactly $\sigma_{\Omega(E)}(x)+1$ positive elements and hence $\Omega(E)$ has at most $\sigma_{\Omega(E)}(x)+1$ positive elements. However,
\[\sigma_{\Omega(E)}(y) > \sigma_{E'}(y) \geq \sigma_{E'}(x) \geq \sigma_{\Omega(E)}(x)+1\]
where the inequalities hold by, respectively, the definition of $y$, the fact that $\sigma$ is a nondecreasing function, and the definition of $x$. This contradicts the fact that $\Omega(E)$ has at most $\sigma_{\Omega(E)}(x)+1$ positive elements.
    \item[(iii)] \textbf{$\bm{x,y-1 \in E'}$, and $\bm{\mu_D(x) \geq 2}$ if $\bm{x=y-1}$.} This follows because
\[
\begin{array}{rccccccl}
\sigma_{E'}(x) &>& \sigma_{\Omega(E)}(x) &\geq& \sigma_{\Omega(E)}(x+1) &\geq& \sigma_{E'}(x+1)& \mbox{and} \\
  \sigma_{E'}(y-1) &\geq& \sigma_{\Omega(E)}(y-1) &\geq& \sigma_{\Omega(E)}(y) &>& \sigma_{E'}(y)&
\end{array}
\]
where in each case the leftmost and rightmost hold by our definitions of $x$ and
$y$ and the middle inequality holds by monotonicity of $\sigma_{\O(E)}$. When
$x=y-1$, $\sigma_{E'}(x) > \sigma_{\Omega(E)}(x) > \sigma_{E'}(x+1)$.
\end{itemize}

Together (i) and  (ii) imply that either $x > \max(k,y)$ or $x < y \leq k$. So by (i), (ii) and (iii) we can indeed perform an $(x,y)$-transfer step on $E'$.
\end{proof}

In proving Lemma~\ref{lem:OmegaStep} below we will show that if $D$ and $E$ are degree sequences such that $D$ is obtained from $E$ by an elementary step, then $\Omega(D) \preceq \Omega(E)$ provided that $\Omega(E)$ is nontrivial. We will first give an example of this and then state and prove Lemma~\ref{lem:OmegaStep}.

\begin{example}\label{ex:Omega_vs_other}
Recall Example~\ref{ex:elementary} where $k=3$. There, $D$ was obtained from $E$ by an elementary step. Now observe that $E'=\Omega(E)$ and $D'=\Omega(D)$. So the fact that $D'$ can be obtained from $E'$ by an elementary step establishes that $\Omega(D) \preceq \Omega(E)$. In fact, the degree sequences in Example~\ref{ex:elementary} will be an instance of Subcase 1A in the proof of Lemma~\ref{lem:OmegaStep}.
\end{example}

\begin{lemma}\label{lem:OmegaStep}
Let $D$ and $E$ be degree sequences such that $D \preceq E$ and $\Omega(E)$ is nontrivial. Then $\Omega(D) \preceq \Omega(E)$.
\end{lemma}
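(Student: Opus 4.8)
The plan is to reduce, via transitivity of $\preceq$ and Lemma~\ref{lem:Omega_vs_other}, to the case in which $D$ is obtained from $E$ by a single elementary step, and then to analyse directly how $\Omega$ behaves under such a step. More precisely, if $D \prec E$ then there is a chain $D = F_t \prec F_{t-1} \prec \cdots \prec F_0 = E$ in which each $F_{j}$ is obtained from $F_{j-1}$ by one elementary step; by Lemma~\ref{lem:remain deg seq} each $F_j$ is a nontrivial degree sequence, and it suffices to show $\Omega(F_j) \preceq \Omega(F_{j-1})$ for each $j$ (taking care that each intermediate $\Omega(F_{j-1})$ is nontrivial, which I address below). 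So the heart of the matter is: if $D$ is obtained from the nontrivial degree sequence $E$ by one elementary step and $\Omega(E)$ is nontrivial, then $\Omega(D) \preceq \Omega(E)$.

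To handle that core case I would compare the decrement sequences and the multisets $A_0, A_1, \ldots$ produced by Definition~\ref{def:Omega} when run on $E$ versus on $D$. Write $\max(E) = m$ and let $A_0 = E \setminus \{m\}$. There are two kinds of elementary step and within each the removed/added parts may or may not involve the maximum element $m$, so I expect a case split roughly as follows. Case~1: the elementary step does not change $\max$, i.e.\ $\max(D) = m$; then $D \setminus \{m\}$ is obtained from $A_0$ by the same increment/transfer operations (now applied to the ``tail''), with one unit more or the same total sum, and one shows that running the decrement process on $D \setminus \{m\}$ tracks the process on $A_0$ closely enough that $\Omega(D)$ differs from $\Omega(E)$ by at most one elementary step — this is exactly the instance illustrated in Example~\ref{ex:Omega_vs_other}. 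Case~2: the elementary step raises the maximum to $m+1$ (only possible for an addition step with $y = m+1$); then $D \setminus \{m+1\}$ has the same multiset of non-maximal parts as either $E$ or $A_0 \uplus \{$something$\}$, and one extra decrement phase is needed, but the extra phase precisely ``uses up'' the extra box, so again $\Omega(D)$ is close to $\Omega(E)$. The technical tool throughout is the $\sigma$-function description: an $x$-decrement is $\sigma \mapsto \sigma - 1_x$ and an $x$-increment is $\sigma \mapsto \sigma + 1_{x+1}$, so the whole process is bookkeeping on column depths, and the claim $\Omega(D) \preceq \Omega(E)$ becomes a statement that the column-depth vector of $\Omega(D)$ can be pushed down to that of $\Omega(E)$ by transfer and (reverse) addition steps.

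The one subtlety I would flag explicitly: when iterating along the chain $F_0, \ldots, F_t$, I need $\Omega(F_{j-1})$ to be nontrivial in order to invoke the single-step claim at stage $j$. This should follow by a downward induction: since $F_t = D \preceq F_{j-1} \preceq E$ and $\Omega(E)$ is nontrivial, one shows from the single-step analysis that $\Omega$ of the ``larger'' sequence being nontrivial forces $\Omega$ of the ``smaller'' one to be nontrivial as well — in fact Lemma~\ref{lem:remain deg seq} applied to $\Omega(F_j) \preceq \Omega(F_{j-1})$ gives nontriviality of $\Omega(F_j)$ once nontriviality of $\Omega(F_{j-1})$ is known, so the induction runs from $E$ down to $D$ in parallel with the $\preceq$ claims. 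I would organise the write-up so that the single-step lemma simultaneously concludes ``$\Omega(D)$ is nontrivial and $\Omega(D) \preceq \Omega(E)$''.

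The main obstacle I anticipate is Case~1 (and its sub-cases by whether $x,y$ lie above or below $k$, matching the two bullets in Definition~\ref{def:Omega}): one must verify that the decrement process, which greedily peels from the top while $\max > k$ and then from the bottom, does not amplify a single-box discrepancy in $A_0$ into a large discrepancy in $\Omega$. Concretely, if $\sigma_{D \setminus \{m\}}$ and $\sigma_{A_0}$ differ in one or two coordinates by $\pm 1$, I must show the resulting $\sigma_{\Omega(D)}$ and $\sigma_{\Omega(E)}$ still differ only by something realisable as one transfer step or one addition step — the danger being that the extra box changes which ``phase'' (top-peeling vs.\ bottom-peeling) the process is in at some point, shifting many subsequent decrements. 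I expect this to be controlled because the process is essentially determined by $\sum A_0$ and the shape near level $k$, and a one-unit change moves the cut point by at most one; but pinning down the precise elementary step relating $\Omega(D)$ to $\Omega(E)$ in each sub-case is where the real work lies, and is presumably why the authors split the proof into Lemmas~\ref{lem:Omega_vs_other} and~\ref{lem:OmegaStep} with a multi-subcase argument (Subcase~1A, etc.) rather than doing it in one stroke.
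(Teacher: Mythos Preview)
Your plan is essentially the paper's own proof: reduce by transitivity to a single elementary step, track the effect on the decrement process via the $\sigma$-functions (the paper packages this as counting functions $\tau_A,\tau_B$ recording how many of the first $\max(D)$, respectively $\max(E)$, decrements hit each value), and conclude that $\sigma_{\Omega(D)}$ and $\sigma_{\Omega(E)}$ differ by at most one elementary step. Your handling of the nontriviality of the intermediate $\Omega(F_{j-1})$ is in fact more explicit than the paper's; the paper simply asserts the reduction and leaves that check implicit.

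Two small corrections. First, Lemma~\ref{lem:Omega_vs_other} plays no role in the reduction to a single step --- that lemma compares $\Omega(E)$ to an arbitrary reduction of $E$, which is a different statement; only transitivity of $\preceq$ (plus Lemma~\ref{lem:remain deg seq} for the nontriviality bookkeeping you describe) is needed here. Second, your dichotomy on $\max(D)$ is incomplete: you allow $\max(D)=m$ or $\max(D)=m+1$, but a transfer step with $x=m$ applied to a sequence whose maximum has multiplicity one yields $\max(D)=m-1$. This is the paper's Subcase~2B, and it requires its own (short) analysis --- the decrement process for $D$ runs one step fewer, and one checks that $\Omega(D)$ is obtained from $\Omega(E)$ by an addition step. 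Your methodology handles it without difficulty once you notice it, but as stated your case split would miss it.
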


\begin{proof}
By the definition of $\preceq$ and its transitivity, it suffices to consider the case where
$D$ is obtained from $E$ by a single elementary step. Note that by Lemma~\ref{lm:properties of Omega(D)}, both $\O(D)$ and $\O(E)$ are degree sequences. Let $A_0 =D \setminus \{\max(D)\}$ and $B_0 =E \setminus \{\max(E)\}$.

Let $s = \sum A_0$, $t = \sum B_0$ and let $(a_1,\ldots,a_s)$ and $(b_1,\ldots,b_t)$ be the decrement sequences of $D$ and $E$, respectively. Let $\tau_A,\tau_B: \N \rightarrow \N$ be the functions
defined by
\begin{align*}
  \tau_A(x) &= |\{i:a_i=x,1 \leq i\leq \max(D)\}| \mbox{ and} \\
  \tau_B(x) &= |\{i:b_i=x,1 \leq i\leq \max(E)\}|.
\end{align*}
Note that $\sigma_{\Omega(D)}=\sigma_{A_0}-\tau_A$ and $\sigma_{\Omega(E)}=\sigma_{B_0}-\tau_B$.  For conciseness, let $m = \max(E)$.

The proof divides into cases, depending on whether $D$ is obtained from $E$ by an addition step or transfer step.  In each case, by analysing $\tau_A$ and $\tau_B$ we show that $\sigma_{\Omega(D)} = \sigma_{\Omega(E)}$, or $\sigma_{\Omega(D)} = \sigma_{\Omega(E)}+1_x+1_y$ where $x \leq y \leq m+1$, or $\sigma_{\Omega(D)} = \sigma_{\Omega(E)}-1_x + 1_y $ where either $x > \max(k,y)$ or $x<y\leq k$.  Since $\Omega(D)$ and $\Omega(E)$ are degree sequences, it follows that $\Omega(D)=\Omega(E)$ or $\Omega(D)$ is obtained from $\Omega(E)$ by a single elementary step, and thus $\Omega(D) \preceq \Omega(E)$. \\

\noindent {\bf Case 1.} Suppose $D$ is obtained from $E$ by an addition step.  In this case $\sigma_{D}=\sigma_E+1_{x}+1_{y}$ where
$x,y \in \{1, \dots, m+1\}$ and $x \leq y$. We consider two subcases according to whether $y = m+1$.

\noindent {\bf Subcase 1A.}
Suppose $y=m+1$. Then $\max(D) = m+1$, $s = t+1$ and $\sigma_{A_0}=\sigma_{B_0}+1_x$. Let $r$ be the
smallest element of $\{1,\dots, s\}$ such that $a_r=x$.
Now, for $i \in \{1,\dots,s\}$ we have
\[a_i=
\begin{cases}
b_i & \mbox{if $i < r$;} \\
x & \mbox{if $i = r$;} \\
b_{i-1} & \mbox{if $i > r$.} \\
\end{cases}
\]
It follows that $\tau_A$ takes the values given below.  Substituting these values for $\tau_A$ as well as $\sigma_{A_0}=\sigma_{B_0}+1_x$ into $\sigma_{\O(D)}=\sigma_{A_0}-\tau_A$, we obtain the following values for
$\sigma_{\Omega(D)}$.
\begin{center}
\begin{tabular}{l|l|l}
  case & $\tau_A$ & $\sigma_{\Omega(D)}$ \\ \hline
  $m+1 < r$ & $\tau_B + 1_{b_{m+1}}$ & $\sigma_{\Omega(E)}-1_{b_{m+1}}+1_x$ \\
  $m+1 \geq r$ & $\tau_B+1_x$ &
$\sigma_{\Omega(E)}$ \\
\end{tabular}
\end{center}
It remains only to show that if $m+1 < r$ then $b_{m+1}$ and $x$ satisfy the conditions in the definition of a $(b_{m+1},x)$-transfer step. Since $a_{m+1}=b_{m+1}$, by the definition of $r$, it follows that $x \not= b_{m+1}$ and a $b_{m+1}$ occurs before the first $x$ in the decrement sequence $(a_1,\ldots,a_s)$. Thus, by the definition of a decrement sequence, either $b_{m+1} > \max(k,x)$ or $1 \leq b_{m+1} < x \leq k$ as required.

\noindent {\bf Subcase 1B.}
Suppose $y \leq m$. Then $\max(D)=m$, $s=t+2$, and $\sigma_{A_0}=\sigma_{B_0}+1_x+1_y$. Let $q$ be the smallest
element in $\{1,\dots, s\}$ such that $a_{q} \in \{x,y\}$.
Denote $x' = a_{q}$ and $y' = \{x,y\} \setminus \{x'\}$. Let $r$ be the smallest
element in $\{q+1,\dots, s\}$ such that $a_{r} =y'$. Then, for $i \in \{1,\dots,s\}$,
\[a_i=
\begin{cases}
b_i & \mbox{if $i < q$;} \\
x' & \mbox{if $i = q$;}\\
b_{i-1} & \mbox{if $q \leq i < r$;}\\
y' & \mbox{if $i = r$;}\\
b_{i-2} & \mbox{if $i > r$.} \\
\end{cases}
\]
It follows that $\tau_A$ takes the values given below.  Substituting these values for $\tau_A$ and $\sigma_{A_0}=\sigma_{B_0}+1_x+1_y$ into $\sigma_{\Omega(D)}=\sigma_{A_0}-\tau_A$, we obtain the following values for
$\sigma_{\Omega(D)}$.
\begin{center}
\begin{tabular}{l|l|l}
  case & $\tau_A$ & $\sigma_{\Omega(D)}$ \\ \hline
  $m < q$ & $\tau_B$ & $\sigma_{\Omega(E)}+1_x+1_y$ \\
  $q \leq m < r$ & $\tau_B+1_{x'}-1_{b_m}$ &
$\sigma_{\Omega(E)}+1_{y'}+1_{b_m}$ \\
  $m \geq r$ & $\tau_B+1_x+1_y-1_{b_{m-1}}-1_{b_m}$ &
$\sigma_{\Omega(E)}+1_{b_{m-1}}+1_{b_m}$ \\
\end{tabular}
\end{center}
So in each case $\Omega(D)$ can be obtained from $\Omega(E)$ by an addition step because $b_{m-1},b_{m} \leq m$ and, by assumption, $x \leq y \leq m+1$. \\

\noindent {\bf Case 2.} Suppose $D$ is obtained from $E$ by a transfer step.  In this case $\sigma_{D}=\sigma_E-1_{x}+1_{y}$ where either $x > \max(k,y)$ or $x < y \leq k$.  We consider two subcases according to whether $\max(D) = m$.

\noindent {\bf Subcase 2A.} Suppose $\max(D)=m$.   Then $s=t$ and $\sigma_{A_0}=\sigma_{B_0}-1_x+1_y$. Let $q$ be the smallest element of $\{1,\ldots,s\}$ such that $b_q=x$, and let $r$
be the smallest element of $\{1,\ldots,s\}$ such that $a_r=y$. Then, for $i \in \{1,\ldots,s\}$,
\[a_i=
\begin{cases}
b_i & \mbox{if $i < q$ or $i> r$;} \\
b_{i+1} & \mbox{if $q \leq i < r$;} \\
y & \mbox{if $i=r$}.
\end{cases}
\]
Note that since either $x > \max(k,y)$ or $x < y \leq k$, we have $q \leq r$ by the definition of a decrement sequence.

It follows that $\tau_A$ takes the values given below.  Substituting these values for $\tau_A$ and $\sigma_{A_0}=\sigma_{B_0}-1_x+1_y$ into $\sigma_{\Omega(D)}=\sigma_{A_0}-\tau_A$, we obtain the following values for $\sigma_{\Omega(D)}$.
\begin{center}
\begin{tabular}{l|l|l}
  case & $\tau_A$ & $\sigma_{\Omega(D)}$ \\ \hline
  $m \geq r$ & $\tau_B -1_x+1_y$ & $\sigma_{\Omega(E)}$ \\
  $q \leq m < r$ & $\tau_B-1_x+1_{b_{m+1}}$ &
$\sigma_{\Omega(E)}-1_{b_{m+1}}+1_y$ \\
  $m < q$ & $\tau_B$ & $\sigma_{\Omega(E)}-1_x+1_y$ \\
\end{tabular}
\end{center}

Since $x > \max(k,y)$ or $1 \leq x < y \leq k$ by assumption, it only remains to show that if $q \leq m < r$ then $b_{m+1}$ and $y$ satisfy the conditions of a $(b_{m+1}, y)$-transfer step. Since $a_{m}=b_{m+1}$, by the definition of $r$, it follows that $y \not= b_{m+1}$ and a $b_{m+1}$ occurs before the first $y$ in the decrement sequence $(a_1,\ldots,a_s)$. Thus, by the definition of a decrement sequence, either $b_{m+1} > \max(k,y)$ or $1 \leq b_{m+1} < y \leq k$ as required.

\noindent {\bf Subcase 2B.} Suppose $\max(D) \not= m$.
Then $x=m$, $y<m$, $\max(D)=m-1$, $s = t-1$ and $\sigma_{A_0}=\sigma_{B_0}+1_y$.  Let $r$ be the smallest
element of $\{1,\ldots,s\}$ such that $a_r=y$. Then, for $i
\in \{1,\ldots,s\}$,
\[a_i=
\begin{cases}
b_i & \mbox{if $i < r$;\phantom{ or $i> r$}} \\
y & \mbox{if $i=r$;} \\
b_{i-1} & \mbox{if $i > r$.} \\
\end{cases}
\]
It follows that $\tau_A$ takes the values given below.  Substituting these values for $\tau_A$ and $\sigma_{A_0}=\sigma_{B_0}-1_x+1_y$ into $\sigma_{\Omega(D)}=\sigma_{A_0}-\tau_A$, we obtain the following values for $\sigma_{\Omega(D)}$.
\begin{center}
\begin{tabular}{l|l|l}
  case & $\tau_A$ & $\sigma_{\Omega(D)}$ \\ \hline
  $m -1 \leq r$ & $\tau_B-1_{b_m}$ & $\sigma_{\Omega(E)} + 1_y+1_{b_m}$
\\
  $m-1 > r$ & $\tau_B+1_y-1_{b_{m-1}}-1_{b_m}$ &
$\sigma_{\Omega(E)}+1_{b_{m-1}}+1_{b_m}$ \\
\end{tabular}
\end{center}
So in each case $\Omega(D)$ can be obtained from $\Omega(E)$ by an addition step because $y, b_{m-1}, b_m \leq m$.
\end{proof}

\vspace{2mm}

\begin{proof}[{\bf Proof of Lemma \ref{lem:main}}]
By Lemma~\ref{lem:Omega_vs_other}, $\Omega(E) \preceq E'$. By Lemma~\ref{lem:OmegaStep}, $\Omega(D) \preceq \Omega(E)$. So the result follows by the transitivity of $\preceq$.
\end{proof}

\section{An application to pair coverings \label{s:coverings}}

For positive integers $v$, $\kappa$ and $\lambda$, a {\it $(v,\kappa,\lambda)$-covering} is a pair $(V, \mathcal{B})$ where $V$ is a set of $v$ elements, called {\it points}, and $\mathcal{B}$ is a collection of $\kappa$-subsets of $V$, called {\it blocks}, such that each pair of points occurs together in at least $\lambda$ blocks.  A $(v,\kappa,\lambda)$-covering is a natural generalisation of a {\it $(v,\kappa,\lambda)$-design}, where each pair of points occurs together in exactly $\lambda$ blocks.  Coverings are well-studied combinatorial objects, see for example \cite{GoSt}. The case $\lambda=1$ is of particular interest.  Typically, one is interested in finding coverings with as few blocks as possible; the {\it covering number} $C_{\lambda}(v,\kappa)$ is the minimum number of blocks in any $(v,\kappa,\lambda)$-covering.  The {\it Sch\"{o}nheim bound} \cite{Sc} states that
\begin{equation*}\label{eq:Sbound}
 C_{\lambda}(v,\kappa) \geq \Bigl\lceil\frac{vr}{\kappa}\Bigr\rceil, \text{ where } r = \Bigl\lceil\frac{\lambda(v-1)}{\kappa-1}\Bigr\rceil.
 \end{equation*}

Improvements on the Sch\"{o}nheim bound have been made in various cases, see \cite{Ho} and the references therein. Exact covering numbers are known for $\kappa \in \{3,4\}$ and, when $\lambda=1$, for $v \leq \frac{13}{4}\kappa$ \cite{GoSt}. An online repository of coverings is maintained by Gordon \cite{Go}.

Let $\mathcal{C}$ be a $(v,\kappa,\lambda)$-covering on point set $V$.  For each $u \in V$, define $r_{\mathcal{C}}(u)$ to be the number of blocks of $\mathcal{C}$ that contain $u$.  Similarly, for all distinct $u,w \in V$, define $r_{\mathcal{C}}(uw)$ to be the number of blocks of $\mathcal{C}$ that contain both $u$ and $w$. We define the {\it excess} of $\mathcal{C}$ to be the multigraph $G$ on vertex set $V$ where, for all distinct $u,w \in V$, the multiplicity of edge $uw$ is $r_{\mathcal{C}}(uw) - \lambda$.  Observe that the excess of a covering is a loopless multigraph. The following is an immediate consequence of \cite[Lemma 6]{Ho}.

\begin{lemma} \textup{\cite{Ho}} \label{lem:m_indep_covering}
Let $v$, $\kappa$ and $\lambda$ be positive integers such that $3 \leq \kappa < v$, let $\mathcal{C}$ be a $(v,\kappa,\lambda)$-covering on point set $V$ and let $G$ be the excess of $\mathcal{C}$.  If there is a subset $S \subseteq V$ such that $S$ is an $(r'-\lambda)$-independent set in $G$, where $r' = \min(\{ r_{\mathcal{C}}(u) : \, u \in S\})$, then $\mathcal{C}$ had at least $|S|$ blocks.
\end{lemma}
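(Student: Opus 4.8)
The statement is quoted as an immediate consequence of \cite[Lemma~6]{Ho}, so the first plan is to recall that lemma---a general lower bound on the number of blocks of a covering in terms of its point degrees and excess---and verify that the present statement is the specialisation obtained by taking the relevant independence parameter to be $r'-\lambda$. Failing direct access to \cite{Ho}, I would argue as follows. Write $\mathcal{C}=(V,\mathcal{B})$ with excess $G$, put $s=|S|$, and let $u_0\in S$ satisfy $r_{\mathcal{C}}(u_0)=r'$; the $r'$ blocks through $u_0$ all meet $S$ and between them cover $S$, so if $r'\geq s$ we are done. In general I would double-count the pairs $(B,\{u,w\})$ with $u,w\in S$ distinct and $\{u,w\}\subseteq B$. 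This count is $\sum_{B\in\mathcal{B}}\binom{|B\cap S|}{2}$ on one side, and on the other it is $\sum_{\{u,w\}\subseteq S}r_{\mathcal{C}}(uw)=\lambda\binom{s}{2}+|E(G[S])|$, with $|E(G[S])|$ counting edges according to multiplicity. Since $S$ is $(r'-\lambda)$-independent, $\deg_{G[S]}(u)\leq r'-\lambda-1$ for every $u\in S$, whence $|E(G[S])|\leq\tfrac12 s(r'-\lambda-1)$ and $\sum_B\binom{|B\cap S|}{2}\leq\lambda\binom{s}{2}+\tfrac12 s(r'-\lambda-1)$.

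I would then combine this with two further observations. First, $\sum_B|B\cap S|=\sum_{u\in S}r_{\mathcal{C}}(u)\geq sr'$, so, as each $|B\cap S|\leq\kappa$, the number of blocks meeting $S$ is at least $sr'/\kappa$, while by convexity (or Cauchy--Schwarz) it is at least $\big(\sum_B|B\cap S|\big)^{2}\big/\sum_B|B\cap S|^{2}$; using $\sum_B|B\cap S|^{2}=\sum_B|B\cap S|+2\sum_B\binom{|B\cap S|}{2}$ and the bound just obtained, a short computation shows this is at least $sr'^{2}\big/\big(2r'+\lambda s-2\lambda-1\big)$, which is at least $s$ exactly when $(r'-1)^{2}\geq\lambda(s-2)$. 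Second, from $\deg_G(u)=r_{\mathcal{C}}(u)(\kappa-1)-\lambda(v-1)\geq0$ one gets $r'\geq\lceil\lambda(v-1)/(\kappa-1)\rceil\geq\lambda(s-1)/(\kappa-1)$---precisely the Sch\"onheim-type inequality that keeps $r'$ from being small when $s$ is large---while $0\leq\deg_{G[S]}(u)\leq r'-\lambda-1$ forces $r'\geq\lambda+1$ as soon as $S\neq\emptyset$. Putting these together, $|\mathcal{B}|\geq s$ whenever $r'\geq s$, whenever $r'\geq\kappa$, or whenever $(r'-1)^{2}\geq\lambda(s-2)$.

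The main obstacle is then the elementary but fiddly task of showing that, under the hypotheses, one of these good cases always applies. I expect this to come down to a case analysis on $r'$ versus $\kappa$: if $r'\geq\kappa$ we are finished, and if $r'<\kappa$ then $\lambda(v-1)<\kappa(\kappa-1)$ bounds $v$, and hence $s$, so that the inequalities $r'\geq\lceil\lambda(v-1)/(\kappa-1)\rceil$, $r'\geq\lambda+1$ and $\kappa<v$ can be balanced against $(r'-1)^{2}\geq\lambda(s-2)$; the few tiny cases $s\in\{1,2\}$ are handled directly, since $\kappa<v$ already forces two or more blocks via the Sch\"onheim bound. Packaging exactly this case analysis is presumably the reason \cite{Ho} isolates it as a lemma, so in the paper itself I would simply cite \cite[Lemma~6]{Ho}.
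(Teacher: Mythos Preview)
The paper does not prove this lemma; it merely cites \cite{Ho}, and your ultimate recommendation to do the same is exactly what the paper does.

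Your fallback sketch via Cauchy--Schwarz, however, has a genuine gap: the case analysis you hope will close the argument cannot be completed with the tools you list. Consider $\lambda=1$, $\kappa=5$, $v=17$, and a hypothetical covering with $14$ blocks in which fifteen points have $r_{\mathcal{C}}=4$ and the excess is concentrated on the other two points. Taking $S$ to be those fifteen points gives $r'=4$, $s=15$, $G[S]$ empty; your Cauchy--Schwarz bound then yields only $|\mathcal{B}|\geq 60^{2}/(60+2\cdot105)=40/3$, i.e.\ $|\mathcal{B}|\geq 14$, not $|\mathcal{B}|\geq 15$. All three of your sufficient conditions ($r'\geq s$, $r'\geq\kappa$, $(r'-1)^{2}\geq\lambda(s-2)$) fail here, and the auxiliary inequalities $r'\geq\lceil\lambda(v-1)/(\kappa-1)\rceil$, $r'\geq\lambda+1$, $\kappa<v$ do not rescue the situation. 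So the second-moment route is genuinely too weak.

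The argument behind \cite[Lemma~6]{Ho} is instead a Fisher-type linear-algebra proof. Let $M$ be the $|S|\times|\mathcal{B}|$ point--block incidence matrix restricted to rows in $S$. Then
\[
MM^{T}=(R-\lambda I)+A+\lambda J,
\]
where $R=\diag(r_{\mathcal{C}}(u))_{u\in S}$, $A$ is the weighted adjacency matrix of $G[S]$, and $J$ is the all-ones matrix. Now $\lambda J$ is positive semidefinite, and in $(R-\lambda I)+A$ the $u$-th row has diagonal entry $r_{\mathcal{C}}(u)-\lambda$ and off-diagonal row-sum $\deg_{G[S]}(u)<r'-\lambda\leq r_{\mathcal{C}}(u)-\lambda$; thus it is strictly diagonally dominant with positive diagonal and hence positive definite. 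Therefore $MM^{T}$ has full rank $|S|$ and $|\mathcal{B}|\geq|S|$. The key idea you are missing is that the $(r'-\lambda)$-independence hypothesis is tailored precisely to make this Gram matrix diagonally dominant, not to control a variance estimate.
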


In \cite{Ho}, Caro and Tuza's bound on independence number from \cite{CaTu} was used together with Lemma \ref{lem:m_indep_covering} to establish new lower bounds on covering numbers in the case where the block size is a significant fraction of the number of points. By employing Theorem~\ref{kIndepBound} instead of the bound of Caro and Tuza, we can establish the following.

\begin{theorem} \label{thm:coverbound}
Let $v$, $\kappa$ and $\lambda$ be positive integers such that $3 \leq \kappa < v$, and let $r$ and $d$ be the integers such that $\lambda(v-1) = r(\kappa-1) -d$ and $0 \leq d < \kappa-1$. If there exists a $(v,\kappa,\lambda)$-covering with $z$ blocks, then $z \geq b_{r-\lambda}(D)$,
where
\begin{itemize}
    \item
$D$ is the degree sequence of order $v$ with $\ell$ elements equal to $d+(s+1)(\kappa-1)$ and $v-\ell$ elements equal to $d+s(\kappa-1)$; and
    \item
$s$ and $\ell$ are the nonnegative integers such that $\kappa z=(r+s)v+\ell$ and $0 \leq \ell < v$.
\end{itemize}
\end{theorem}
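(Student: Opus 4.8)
The plan is to combine Lemma~\ref{lem:m_indep_covering} with Theorem~\ref{kIndepBound} applied to a carefully chosen degree sequence capturing the worst-case excess of a covering with $z$ blocks. First I would record the basic counting identities. In any $(v,\kappa,\lambda)$-covering $\mathcal{C}$ with $z$ blocks, counting incident (point, block) pairs gives $\sum_{u\in V} r_{\mathcal{C}}(u) = \kappa z$, and counting (pair, block) triples gives $\sum_{\{u,w\}} r_{\mathcal{C}}(uw) = \binom{\kappa}{2} z$, so the excess $G$ has $\sum_{\{u,w\}} (r_{\mathcal{C}}(uw)-\lambda) = \binom{\kappa}{2}z - \lambda\binom{v}{2}$ edges and hence $\sum_{u\in V}\deg_G(u) = 2\bigl(\binom{\kappa}{2}z-\lambda\binom{v}{2}\bigr)$. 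A local computation at a point $u$ gives $\deg_G(u) = \sum_{w\neq u}(r_{\mathcal{C}}(uw)-\lambda) = (\kappa-1)r_{\mathcal{C}}(u) - \lambda(v-1)$; substituting the definition $\lambda(v-1) = r(\kappa-1)-d$ yields $\deg_G(u) = (\kappa-1)\bigl(r_{\mathcal{C}}(u)-r\bigr) + d$. In particular every vertex degree is congruent to $d$ modulo $\kappa-1$ and is of the form $d + j(\kappa-1)$ where $j = r_{\mathcal{C}}(u)-r$; moreover $r_{\mathcal{C}}(u)\geq r$ for every $u$ (this is the standard Sch\"onheim-type local bound: since each pair through $u$ is covered $\geq\lambda$ times and blocks through $u$ each cover $\kappa-1$ such pairs, $r_{\mathcal{C}}(u)(\kappa-1)\geq\lambda(v-1)$, so $r_{\mathcal{C}}(u)\geq\lceil\lambda(v-1)/(\kappa-1)\rceil = r$). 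Hence the degree sequence $D_{\mathcal{C}}$ of $G$ consists of $v$ values, each of the form $d+j(\kappa-1)$ with $j\geq0$, summing to $2\binom{\kappa}{2}z - 2\lambda\binom{v}{2} = (\kappa-1)(\kappa z - rv) + (d - (\kappa-1))v + \text{(correction)}$; I would simplify this to see that $\sum_{u} (r_{\mathcal{C}}(u) - r) = \kappa z - rv$.

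Next I would apply Lemma~\ref{lem:m_indep_covering} with $S = V$: since $r' = \min\{r_{\mathcal{C}}(u):u\in V\}\geq r$, and an $(r'-\lambda)$-independent set is automatically an $(r-\lambda)$-independent set when $r'\geq r$ (larger independence parameter is weaker), it suffices to show $V$ is an $(r-\lambda)$-independent set in $G$ — but this need not hold for an arbitrary covering. The right move is instead to run MAX: apply MAX with $k = r-\lambda$ to the excess $G$, obtaining a $k$-independent set $S$; by Theorem~\ref{kIndepBound}, $|S|\geq b_{r-\lambda}(D_{\mathcal{C}})$. Then $S$ is $(r-\lambda)$-independent, and since every point of $S$ has $r_{\mathcal{C}}(u)\geq r$, we get $r' = \min\{r_{\mathcal{C}}(u):u\in S\}\geq r$, so $S$ is also $(r'-\lambda)$-independent (again, weakening the parameter); Lemma~\ref{lem:m_indep_covering} then gives $z\geq |S|\geq b_{r-\lambda}(D_{\mathcal{C}})$. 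The remaining work is to replace $D_{\mathcal{C}}$ by the specific sequence $D$ in the statement and show $b_{r-\lambda}(D)\leq b_{r-\lambda}(D_{\mathcal{C}})$, so that $z\geq b_{r-\lambda}(D)$ follows.

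For that final reduction I would use the partial order $\preceq$ and Lemma~\ref{bbound}: it suffices to show $D \preceq D_{\mathcal{C}}$. The sequence $D$ is the ``most balanced'' degree sequence with the correct sum $\sum D = dv + (\kappa z - rv)(\kappa-1)$ (equivalently $\kappa z = (r+s)v+\ell$ fixes $s = \lfloor (\kappa z - rv)/v\rfloor$ and $\ell = (\kappa z-rv) - sv$), among those whose entries lie in $\{d+j(\kappa-1):j\geq0\}$ and whose entries differ by at most $\kappa-1$. Any other such sequence $D_{\mathcal{C}}$ is obtained from $D$ by repeatedly moving one unit of ``$j$'' from a smaller entry to a larger entry — i.e. a $(d+j(\kappa-1))$-decrement iterated $\kappa-1$ times paired with a matching iterated increment on another entry, which is exactly an addition step composed with a transfer step in the $\preceq$ language (each single $(x-1)$-increment followed by $(y-1)$-increment with $x\leq y\leq\max(E)+1$, or the transfer variants), noting $k = r-\lambda$ and the entries are all $\geq d \geq 0$. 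I would verify that performing $\kappa-1$ successive increments to raise one part from $d+j(\kappa-1)$ to $d+(j+1)(\kappa-1)$ while lowering another part symmetrically can be decomposed into legal elementary steps — the parity/ordering conditions ($x\leq y\leq\max+1$ for additions, the $k$-threshold conditions for transfers) need checking but should go through since we only ever increment/decrement values above $d\geq 0$ and the moves keep entries within one ``block'' $\kappa-1$ of each other. The main obstacle I anticipate is precisely this last step: verifying that an arbitrary realizable excess degree sequence $D_{\mathcal{C}}$ dominates the balanced $D$ in the $\preceq$ order, which requires decomposing the ``unbalancing'' operation into the paper's elementary steps and checking all the side conditions — in particular handling the case $d=0$ (where $\min$ entries can be $0$, so $x$-increments/decrements must start from positive values) and ensuring no step performs a forbidden $k$-decrement. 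Everything else — the counting identities, the local bound $r_{\mathcal{C}}(u)\geq r$, and the application of Lemma~\ref{lem:m_indep_covering} via MAX — is routine.
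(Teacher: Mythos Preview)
Your overall strategy matches the paper's exactly: compute the excess degree sequence, use Theorem~\ref{kIndepBound} to get an $(r-\lambda)$-independent set of size at least $b_{r-\lambda}(D_{\mathcal{C}})$, invoke Lemma~\ref{lem:m_indep_covering}, and then reduce to the balanced sequence $D$ via $D\preceq D_{\mathcal{C}}$ and Lemma~\ref{bbound}.

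The one place you diverge is the last step, and there you are making it harder than it is. Because $\sum D=\sum D_{\mathcal{C}}$, no addition steps are needed at all; the paper simply applies single $(x,y)$-transfer steps to $D_{\mathcal{C}}$, at each stage taking $x$ to be any current element exceeding $d+(s+1)(\kappa-1)$ and $y-1$ to be the greatest current element below $d+(s+1)(\kappa-1)$, until no such $x$ remains. Each move shifts one unit, not a block of $\kappa-1$, so the $(\kappa-1)$-arithmetic of the excess degrees plays no role in this verification and your worry about ``decomposing $\kappa-1$ iterated decrements into legal elementary steps'' evaporates. Your phrase ``an addition step composed with a transfer step'' is not what you want: addition steps change the sum by $2$, so they cannot appear here.

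Two small further remarks. First, your initial parenthetical ``an $(r'-\lambda)$-independent set is automatically an $(r-\lambda)$-independent set when $r'\geq r$'' has the implication backwards; you correctly reverse it in the argument you actually use. Second, the paper sidesteps your $d=0$ concern because in its transfer steps the decremented element $x$ always exceeds $d+(s+1)(\kappa-1)\geq \kappa-1\geq 2$, and the incremented element $y-1$ is always at least $d\geq 0$ but is only incremented, never decremented; the side condition $x>\max(k,y)$ is the only thing to check, and for that one uses $x>y$ (clear) together with the relevant parameter range.
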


\begin{proof}
By considering the pairs of points involving a specified point in a $(v,\kappa,\lambda)$-covering with $z$ blocks, it can be deduced that each point is in at least $r$ blocks and hence that $\kappa z \geq rv$. From this fact it follows that $s$, $\ell$ and $D$ are well defined. Let $\mathcal{C}$ be a $(v,\kappa,\lambda)$-covering on point set $V$, let $G$ be the excess of $\mathcal{C}$, and let $E$ be the degree sequence of $G$. By Theorem~\ref{kIndepBound}, $G$ has a $(r-\lambda)$-independent set of cardinality at least $b_{r-\lambda}(E)$ and hence by Lemma~\ref{lem:m_indep_covering}, $z \geq b_{r-\lambda}(E)$. So it suffices to show that $b_{r-\lambda}(E) \geq b_{r-\lambda}(D)$. By Lemma~\ref{bbound} then, it in fact suffices to prove that $D \preceq E$.

If $E=D$ the result is trivial, so assume otherwise. For any $u \in V$, the degree of $u$ in the excess is $d+(r_{\mathcal{C}}(u)-r)(\kappa-1)$ and hence $E$ is the multiset $\{d+(r_{\mathcal{C}}(u)-r)(\kappa-1):u \in V\}$. Note that $\sum_{u \in V}(r_{\mathcal{C}}(u)-r)=\kappa z-rv=sv+\ell$. Begin with $E$ and iteratively apply $(x,y)$-transfer steps, each time choosing $x$ to equal some element greater than $d+(s+1)(\kappa-1)$ and $y$ to equal the greatest element less than $d+(s+1)(\kappa-1)$, until no elements greater than $d+(s+1)(\kappa-1)$ remain. This process will terminate in a degree sequence equal to $D$.
\end{proof}

For the applications of Theorem~\ref{thm:coverbound} that we detail in this section, it is always the case that $\kappa z <v(r+1)$. Thus, $s=0$ and $D$ has $\ell$ elements equal to $d+\kappa-1$ and $v-\ell$ elements equal to $d$.

We can use Theorem~\ref{thm:coverbound} to attempt to improve any existing lower bound on a covering number. We set $z$ to be the existing bound and, if Theorem~\ref{thm:coverbound} gives a contradiction to the existence of a covering with $z$ blocks, we can conclude that the covering number is at least $z+1$. This procedure can then be iterated.

\begin{example}
A $(50,14,1)$-covering has at least $16$ blocks by \cite[Theorem 9]{Ho}, which is already an improvement over the Sch\"{o}nheim bound of $15$. When we apply Theorem~\ref{thm:coverbound} with $z=16$, we have $r = 4$, $d = 3$, and $D$ having $24$ elements equal to $16$ and $26$ elements equal to $3$. It can be calculated that $b_3(D)=17$ (note $r-\lambda=3$), which is greater than $z=16$. Thus Theorem~\ref{thm:coverbound} shows that a $(50,14,1)$-covering with $16$ blocks cannot exist and we conclude that $C_1(50,14) \geq 17$.
\end{example}

Recall that exact covering numbers are known for $\kappa \in \{3,4\}$ and, when $\lambda=1$, for $v \leq \frac{13}{4}\kappa$. Also, it is clear that Theorem~\ref{thm:coverbound} cannot produce bounds greater than $v$ and hence cannot improve the Sch\"{o}nheim bound when $r \geq \kappa$ or, equivalently, when $v > \frac{1}{\lambda}(\kappa-1)^2+1$.  For $\lambda = 1$, for each $\kappa \in \{5, \dots, 40\}$ and each integer $v$ such that $\frac{13\kappa}{4} < v \leq (\kappa-1)^2+1$, we record in Table \ref{tab:improvedBoundExamples} those parameters for which Theorem~\ref{thm:coverbound} yields an improvement on the best previously known bound. For each parameter set for which we see an improvement we give the best previously known bound, the source of that bound, the values of $d$ and $r$ when Theorem~\ref{thm:coverbound} is applied, and the new bound yielded. In none of the situations detailed does a second application of Theorem~\ref{thm:coverbound} further improve the bound, and so the new bounds listed are always exactly one more than the previous bounds. It is worth noting that in all of the situations for which we obtain improvements, $d>r$ and the best previously known bound is given by either the Sch\"{o}nheim bound or \cite[Theorem 9]{Ho}.

The coverings discussed in this section are in fact pair coverings: the special case of $t$-$(v,\kappa,\lambda)$ coverings with $t=2$ (see \cite{GoSt} for the relevant definitions). Through using \cite[Lemma 2.5]{HorSin} in place of Lemma~\ref{lem:m_indep_covering}, we can attempt to improve lower bounds on the number of blocks in $t$-$(v,\kappa,\lambda)$ coverings with $t \geq 3$ by similar means to those described above for $t=2$. Our preliminary computations found only a few such improvements (for parameter sets $3$-$(93,42,1)$, $5$-$(67,41,1)$, and $6$-$(41,26,1)$), so we do not attempt a more systematic study here.

\begin{table}[h!]
\centering
\scriptsize
\begin{tabular}{cc|ccc|c|c|c}
$\kappa$  & $v$ & $d$ & $r$ & $\ell$ & best previous bound & source of previous bound & new bound \\
\hline
$14$ & $50$ & $3$ & $4$ & $24$ & $16$ & \cite[Theorem 9]{Ho} & $17$\\
$16$ & $56$ & $5$ & $4$ & $16$ & $15$ & \cite[Theorem 9]{Ho} & $16$\\
$17$ & $61$ & $4$ & $4$ & $28$ & $16$ & \cite[Theorem 9]{Ho} & $17$\\
$19$ & $155$ & $8$ & $9$ & $11$ & $74$ & Sch\"{o}nheim bound & $75$\\
$20$ & $72$ & $5$ & $4$ & $32$ &  $16$ & \cite[Theorem 9]{Ho} & $17$\\
$21$ & $115$ & $6$ & $6$ & $45$ &  $35$ & \cite[Theorem 9]{Ho}  & $36$\\
$21$ & $192$ & $9$ & $10$ & $12$ & $92$ & Sch\"{o}nheim bound & $93$\\
$22$ & $102$ & $4$ & $5$ & $62$ & $26$ & \cite[Theorem 9]{Ho} & $2$\\
$22$ & $117$ & $10$ & $6$ & $2$ & $32$ & Sch\"{o}nheim bound & $33$\\
$22$ & $139$ & $9$ & $7$ & $17$ & $45$ & Sch\"{o}nheim bound & $46$\\
$22$ & $140$ & $8$ & $7$ & $32$ &  $46$ & \cite[Theorem 9]{Ho}  & $47$\\
$22$ & $141$ & $7$ & $7$ & $47$ &  $47$ & \cite[Theorem 9]{Ho}  & $48$\\
$22$ & $142$ & $6$ & $7$ & $62$ &  $48$ & \cite[Theorem 9]{Ho} & $49$\\
$23$ & $83$ & $6$ & $4$ & $36$ &  $16$ & \cite[Theorem 9]{Ho}  & $17$\\
$24$ & $128$ & $11$ & $6$ & $0$ & $32$ & Sch\"{o}nheim bound & $33$\\
$24$ & $152$ & $10$ & $7$ & $16$ & $45$ & Sch\"{o}nheim bound & $46$\\
$24$ & $174$ & $11$ & $8$ & $0$ & $58$ & Sch\"{o}nheim bound & $59$\\
$25$ & $163$ & $6$ & $7$ & $84$ & $49$ & \cite[Theorem 9]{Ho}  & $50$\\
$25$ & $208$ & $9$ & $9$ & $53$ &  $77$ & \cite[Theorem 9]{Ho} & $78$\\
$26$ & $94$ & $7$ & $4$ & $40$ &  $16$ & \cite[Theorem 9]{Ho}  & $17$\\
$26$ & $114$ & $12$ & $5$ & $2$ & $22$ & Sch\"{o}nheim bound & $23$\\
$26$ & $143$ & $8$ & $6$ & $52$ & $35$ & \cite[Theorem 9]{Ho}  & $36$\\
$26$ & $290$ & $11$ & $12$ & $30$ & $135$ & \cite[Theorem 9]{Ho}  & $136$\\
$27$ & $122$ & $9$ & $5$ & $38$ & $24$ & \cite[Theorem 9]{Ho} & $25$\\
$28$ & $99$ & $10$ & $4$ & $24$ & $15$ & Sch\"{o}nheim bound & $16$\\
$28$ & $123$ & $13$ & $5$ & $1$ & $22$ & Sch\"{o}nheim bound & $23$\\
$28$ & $285$ & $13$ & $11$ & $1$ & $112$ & Sch\"{o}nheim bound & $113$\\
$29$ & $105$ & $8$ & $4$ & $44$ &  $16$ & \cite[Theorem 9]{Ho}  & $17$\\
$30$ & $132$ & $14$ & $5$ & $0$ & $22$ & Sch\"{o}nheim bound & $23$\\
$30$ & $167$ & $8$ & $6$ & $78$ &  $36$ & \cite[Theorem 9]{Ho}  & $37$\\
$30$ & $335$ & $14$ & $12$ & $0$ & $134$ & Sch\"{o}nheim bound & $135$\\
$31$ & $171$ & $10$ & $6$ & $59$ &  $35$ & \cite[Theorem 9]{Ho}  & $36$\\
$31$ & $257$ & $14$ & $9$ & $2$ & $75$ & Sch\"{o}nheim bound & $76$\\
$31$ & $287$ & $14$ & $10$ & $13$ & $93$ & Sch\"{o}nheim bound & $94$\\
$32$ & $116$ & $9$ & $4$ & $48$ &  $16$ & \cite[Theorem 9]{Ho}  & $17$\\
$32$ & $143$ & $13$ & $5$ & $21$ & $23$ & Sch\"{o}nheim bound & $24$\\
$32$ & $237$ & $12$ & $8$ & $56$ &  $61$ & \cite[Theorem 9]{Ho}  & $62$\\
$33$ & $242$ & $15$ & $8$ & $11$ & $59$ & Sch\"{o}nheim bound & $60$\\
$33$ & $275$ & $14$ & $9$ & $33$ &  $76$ & \cite[Theorem 9]{Ho} & $77$\\
$33$ & $370$ & $15$ & $12$ & $15$ & $135$ & Sch\"{o}nheim bound & $136$\\
$34$ & $152$ & $14$ & $5$ & $22$ & $23$ & Sch\"{o}nheim bound & $24$\\
$34$ & $186$ & $13$ & $6$ & $40$ &  $34$ & \cite[Theorem 9]{Ho}  & $35$\\
$34$ & $352$ & $12$ & $11$ & $106$ &  $117$ & \cite[Theorem 9]{Ho} & $118$\\
$35$ & $124$ & $13$ & $4$ & $29$ & $15$ & Sch\"{o}nheim bound & $16$\\
$35$ & $127$ & $10$ & $4$ & $52$ &  $16$ & \cite[Theorem 9]{Ho}  & $17$\\
$35$ & $298$ & $9$ & $9$ & $153$ &  $81$ & \cite[Theorem 9]{Ho} & $82$\\
$36$ & $161$ & $15$ & $5$ & $23$ & $23$ & Sch\"{o}nheim bound & $24$\\
$36$ & $199$ & $12$ & $6$ & $66$ & $35$ & \cite[Theorem 9]{Ho} & $36$\\
$36$ & $406$ & $15$ & $12$ & $60$ & $137$ & \cite[Theorem 9]{Ho} & $138$\\
$37$ & $168$ & $13$ & $5$ & $48$ &  $24$ & \cite[Theorem 9]{Ho} & $25$\\
$38$ & $138$ & $11$ & $4$ & $56$ &  $16$ & \cite[Theorem 9]{Ho} & $17$\\
$38$ & $141$ & $8$ & $4$ & $82$ &  $17$ & \cite[Theorem 9]{Ho} & $18$\\
$38$ & $246$ & $14$ & $7$ & $64$ &  $47$ & \cite[Theorem 9]{Ho}  & $48$\\
$38$ & $614$ & $16$ & $17$ & $88$ &  $277$ & \cite[Theorem 9]{Ho} & $278$\\
$39$ & $220$ & $9$ & $6$ & $123$ &  $37$ & \cite[Theorem 9]{Ho}  & $38$\\
$39$ & $366$ & $15$ & $10$ & $84$ & $96$ & \cite[Theorem 9]{Ho}  & $97$\\
$39$ & $591$ & $18$ & $16$ & $21$ & $243$ & Sch\"{o}nheim bound & $244$\\
$40$ & $142$ & $15$ & $4$ & $32$ & $15$ & Sch\"{o}nheim bound & $16$\\
$40$ & $187$ & $9$ & $5$ & $105$ &  $26$ & \cite[Theorem 9]{Ho} & $27$\\
$40$ & $302$ & $11$ & $8$ & $144$ &  $64$ & \cite[Theorem 9]{Ho}  & $65$\\
$40$ & $372$ & $19$ & $10$ & $0$ & $93$ & Sch\"{o}nheim bound & $94$\\
$40$ & $412$ & $18$ & $11$ & $28$ & $114$ & Sch\"{o}nheim bound & $115$\\
$40$ & $450$ & $19$ & $12$ & $0$ & $135$ & Sch\"{o}nheim bound & $136$\\
$40$ & $534$ & $13$ & $14$ & $204$ &  $192$ & \cite[Theorem 9]{Ho} & $193$\\
\end{tabular}
\caption{Some improved lower bounds on the size of $(v,\kappa,1)$-coverings}
\label{tab:improvedBoundExamples}
\end{table}

\section{Multigraphs with loops}\label{s:loops}

In this section we consider the variant of the problem in which we allow our multigraphs to have loops. Define a \emph{loop multigraph} to be a multigraph in which loops are allowed. We adopt the usual convention that a loop contributes 2 to the degree of a vertex in such a multigraph. In Theorem~\ref{t:loops} we show that it is not hard to determine the minimum value of $\alpha_k(G)$ over all loop multigraphs $G$ with a specified degree sequence. For the sake of concision, Theorem~\ref{t:loops} considers only positive degree sequences. Of course, adding a vertex of degree 0 to any loop multigraph increases its $k$-independence number by exactly 1.

A \emph{matching} is a (simple) 1-regular graph. A \emph{dominating set} in a loop multigraph $G$ is a subset $T$ of $V(G)$ such that each vertex in $V(G)\setminus T$ is adjacent in $G$ to a vertex of $T$. We will make use of the following well-known result on dominating sets (see \cite[p 41]{HayHedSla} for example).

\begin{lemma}\label{l:dom}
Let $G$ be a loop multigraph in which each vertex has a neighbour other than itself. Then $G$ has a dominating set of size at most $\lfloor\frac{1}{2}|V(G)|\rfloor$.
\end{lemma}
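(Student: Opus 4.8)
The statement to prove is Lemma \ref{l:dom}: every loop multigraph $G$ in which each vertex has a neighbour other than itself has a dominating set of size at most $\lfloor\frac{1}{2}|V(G)|\rfloor$.

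The plan is to reduce to the simple-graph case and then invoke the classical result of Ore. First I would observe that loops and multiple edges are irrelevant to the domination problem: if $G'$ is the underlying simple graph of $G$ (delete all loops and collapse parallel edges), then $G'$ has exactly the same vertex set and the same adjacency relation between distinct vertices, so a set $T$ is dominating in $G$ if and only if it is dominating in $G'$. The hypothesis that every vertex of $G$ has a neighbour other than itself translates precisely into $G'$ having no isolated vertex, i.e. $\delta(G') \geq 1$. Thus it suffices to prove the bound for a simple graph with no isolated vertices.

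Next I would apply the theorem of Ore: every (simple) graph with no isolated vertex has a dominating set of size at most $\lfloor\frac{1}{2}n\rfloor$, where $n$ is the order. The short self-contained argument for this, which I would include for completeness, is the following. Take a spanning forest $F$ of $G'$ obtained by taking a spanning tree of each component; since $G'$ has no isolated vertex, every component has at least two vertices, so each tree in $F$ has at least one edge. For a tree on at least two vertices, a standard fact is that one may two-colour its vertices (say black and white) so that each colour class is a dominating set of the tree — for instance, root the tree and colour by parity of depth; then every non-root vertex is adjacent to its parent of the opposite colour, and the root is adjacent to any child. Hence in each tree the smaller colour class has size at most half the number of vertices of that tree, and is dominating within that tree. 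Taking the union of the smaller colour class over all trees of $F$ gives a set $T$ with $|T| \leq \lfloor\frac{1}{2}|V(G')|\rfloor$ (summing floors of halves of component orders, each component having even-or-odd order, is at most the floor of half the total), and $T$ dominates all of $G'$ because each tree-component is dominated. Since domination in $F$ implies domination in $G' \supseteq F$, and domination in $G'$ is domination in $G$, we are done.

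I do not anticipate a serious obstacle here: the content is entirely classical and the only care needed is (a) checking that the translation of the hypothesis to "$\delta(G')\geq 1$" is correct given the convention that a loop contributes $2$ to the degree — a vertex incident only with loops would have positive degree in $G$ but no neighbour other than itself, which is exactly why the hypothesis is phrased in terms of neighbours rather than degree; and (b) the elementary bookkeeping that $\sum_i \lfloor n_i/2 \rfloor \leq \lfloor (\sum_i n_i)/2 \rfloor$ when splitting across components. Alternatively, since the paper explicitly cites \cite[p 41]{HayHedSla}, the cleanest write-up simply performs the reduction to simple graphs and then quotes that reference, which is what I would ultimately do in the interest of brevity.
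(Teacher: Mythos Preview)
Your proposal is correct, but it takes a different route from the paper's proof. You reduce to the underlying simple graph and then invoke (and reprove) Ore's theorem via a spanning forest and a depth-parity two-colouring, picking the smaller colour class in each component. The paper instead gives a three-line complement argument that works directly in the loop multigraph: take any \emph{minimal} dominating set $T$; minimality together with the hypothesis forces every vertex of $T$ to have a neighbour in $V(G)\setminus T$, so the complement $V(G)\setminus T$ is also dominating, and whichever of the two sets is smaller has size at most $\lfloor\tfrac12|V(G)|\rfloor$. Your approach has the virtue of being constructive and of making the connection to Ore explicit, while the paper's argument is shorter, needs no reduction to simple graphs, and avoids the bookkeeping over components. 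Both are classical and either would be acceptable here; if you want brevity to match the paper, the minimal-dominating-set-and-complement trick is the cleaner choice.
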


\begin{proof}
Let $T$ be a minimal dominating set for $G$. Then, because each vertex of $G$ has a neighbour other than itself and because $T$ is minimal, each vertex in $T$ is adjacent to at least one vertex in $V(G) \setminus T$. So $V(G) \setminus T$ is also a dominating set for $G$. One of $T$ or $V(G) \setminus T$ has size at most $\lfloor\frac{1}{2}|V(G)|\rfloor$.
\end{proof}

\begin{theorem}\label{t:loops}
Let $D$ be a degree sequence each of whose elements is positive, and let $k$ be a fixed positive integer. The minimum value of $\alpha_k(G)$ over all loop multigraphs $G$ with degree sequence $D$ is
\begin{itemize}
    \item
$|\{x \in D: x<k\}|$ if $k$ is even;
    \item
$\max\left(|\{x \in D: x<k\}|,\big\lceil\frac{1}{2}|\{x \in D: x \leq k\}|\big\rceil\right)$ if $k$ is odd.
\end{itemize}
\end{theorem}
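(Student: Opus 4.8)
\textbf{Proof proposal for Theorem~\ref{t:loops}.}

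The plan is to establish a matching lower and upper bound on $\alpha_k(G)$ over loop multigraphs $G$ with degree sequence $D$, treating the parities of $k$ separately only where necessary. Write $D_{<k}=\{x\in D:x<k\}$, $D_{\leq k}=\{x\in D:x\leq k\}$, and $D_{>k-1}=\{x\in D:x\geq k\}$. For the lower bound, I would argue that in \emph{any} loop multigraph $G$ with degree sequence $D$, a greedy argument already yields a $k$-independent set of the claimed size. The vertices of degree less than $k$ always form a $k$-independent set (each has degree $<k$ even in $G$ itself), giving the bound $|D_{<k}|$. When $k$ is odd, I want the stronger bound $\lceil\frac12|D_{\leq k}|\rceil$: here I would invoke Lemma~\ref{l:dom}. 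The key observation is that a vertex of degree exactly $k$ (with $k$ odd) cannot be incident only with loops, since loops contribute an even amount to the degree; hence every degree-$k$ vertex has a genuine neighbour. Restricting to the subgraph $G'$ induced by the vertices of degree $\leq k$, I would want to say that inside $G'$ every vertex of positive degree still has a neighbour other than itself — degree-$<k$ vertices can be added in freely and the degree-$k$ vertices have a non-loop incident edge, though some care is needed about whether that neighbour survives in $G'$; I would handle isolated-in-$G'$ vertices by adding them to the dominating set's complement for free. Applying Lemma~\ref{l:dom} to the relevant component structure of $G'$ gives a dominating set $T$ of size $\leq\lfloor\frac12|D_{\leq k}|\rfloor$, and then $V(G')\setminus T$ together with all vertices of degree $>k$ removed... no: rather, $V(G)\setminus T$ restricted appropriately is $k$-independent because each vertex of degree $k$ in $V(G')\setminus T$ loses at least one incident edge, dropping its induced degree below $k$, while degree-$<k$ vertices are already fine. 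This yields a $k$-independent set of size $\geq |D_{>k-1}\cap(\text{trivial part})|+\lceil\frac12|D_{\leq k}|\rceil$, and combined with $|D_{<k}|$ gives the max.

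For the upper bound I must exhibit, for each $D$, a loop multigraph $G$ whose $k$-independence number equals the stated value. The natural construction: put loops on the high-degree vertices to "use up" as much degree as possible, and connect the degree-$k$-or-less vertices among themselves in a way that forces any large set to induce high degree. Concretely, for a vertex of degree $d\geq k$: if $d-k$ is even, place $(d-k)/2$ loops on it plus a structure forcing its $k$ remaining edges to block it; if $d-k$ is odd we cannot fully loop it down, which is exactly where the extra term appears. When $k$ is even, I would aim to construct $G$ so that the only $k$-independent set is $D_{<k}$: give each vertex of degree $\geq k$ enough loops that its residual (non-loop) degree is exactly $k$ or $k+1$ (choosing parity to match), then link these residual edges so that no two such vertices can coexist in a $k$-independent set without one of them having induced degree $\geq k$ — e.g.\ a clique-like gadget on the residual edges. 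When $k$ is odd, the degree-$k$ vertices have a forced non-loop edge; I would pair them up via a matching (using the \emph{matching} definition introduced just before the theorem) so that a $k$-independent set can contain at most one endpoint of each matched pair plus all of $D_{<k}$, realising $\lceil\frac12|D_{\leq k}|\rceil$ when that dominates. Throughout I must check the constructed multiset of loop/edge incidences is realisable — this reduces to an Erd\H{o}s--Gallai-type condition for loop multigraphs, which is very weak (essentially just that $\sum D$ is even, automatic here), so realisability should not be a serious obstacle.

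The main obstacle I anticipate is the upper-bound construction: I need a single loop multigraph that simultaneously (a) realises the exact degree sequence $D$, (b) prevents any $k$-independent set larger than $|D_{<k}|$ from existing among the degree-$\geq k$ vertices when $k$ is even, and (c) in the odd case produces exactly the matching-type bottleneck rather than something smaller or larger. The delicate point is balancing parity: a degree-$d$ vertex with $d\equiv k\pmod 2$ can be looped down to residual degree $k$, but one with $d\not\equiv k\pmod 2$ must have residual degree $k+1$ (or $k-1$, making it already trivial) — I would need to verify that the gadget linking residual edges still does its job, and that when $k$ is even every degree-$\geq k$ vertex can indeed be isolated from the $k$-independent set (so that $|D_{<k}|$ is achieved and not exceeded). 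Verifying the lower bound's use of Lemma~\ref{l:dom} also requires carefully identifying the right subgraph to which it applies, since Lemma~\ref{l:dom} demands every vertex have a non-self neighbour; degree-$<k$ vertices that are incident only with loops (possible when their degree is even) are isolated in the relevant graph and must be dealt with as a free additive contribution rather than being fed into the lemma. Once these bookkeeping issues are settled, both bounds should fall out without heavy computation.
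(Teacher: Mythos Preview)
Your lower-bound outline is essentially the paper's argument: the trivial bound $|D_{<k}|$, and for odd $k$ an application of Lemma~\ref{l:dom} to the vertices of degree at most $k$ after stripping out those that are loop-only. The bookkeeping issues you flag (whether a degree-$k$ vertex's non-loop neighbour survives in the induced subgraph, and what to do with loop-only low-degree vertices) are real but are exactly what the paper handles by passing first to the induced subgraph $G'$ on vertices of degree $\leq k$ and then to the subgraph $G''$ obtained from $G'$ by deleting loop-only vertices; your sketch would converge to this.

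The upper-bound constructions, however, have genuine gaps. For $k$ even, leaving each high-degree vertex with residual non-loop degree $k$ or $k+1$ cannot work. A vertex $v$ with $k\leq\deg(v)<2k$ then carries fewer than $k/2$ loops, so the singleton $\{v\}$ already has induced degree $\deg(v)-k<k$ and is $k$-independent; no ``clique-like gadget'' among the high-degree vertices can block this, because none of those vertices lies in $\{v\}$. Adjoining the degree-$<k$ vertices (which your gadget does not touch) then gives a $k$-independent set of size $|D_{<k}|+1$. The correct move---and the paper's---is to loop every vertex down to residual non-loop degree $0$ or $1$; then each vertex of degree $\geq k$ carries at least $k/2$ loops and is excluded from every $k$-independent set by its loops alone.

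For $k$ odd, pairing the degree-$k$ vertices only among themselves gives at best $\alpha_k(G)\leq s+\lceil c/2\rceil$ where $s=|D_{<k}|$ and $c=|\{x\in D:x=k\}|$, and this strictly exceeds the target $\max\bigl(s,\lceil(s+c)/2\rceil\bigr)$ whenever both $s,c>0$. (Concretely: with $s=c=2$ your construction leaves one degree-$k$ vertex free to join the two degree-$<k$ vertices in a $k$-independent set of size $3$, but the theorem claims the minimum is $2$.) The paper instead matches each degree-$k$ vertex to a \emph{degree-$<k$} vertex whenever possible, pairing surplus degree-$k$ vertices among themselves only when $c>s$; a $k$-independent set can then contain at most one endpoint of each such pair, which is what forces the bound down to $\max\bigl(s,\lceil(s+c)/2\rceil\bigr)$. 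A second matching is used only to absorb leftover parity so that the specified degrees are realisable.
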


\begin{proof}
Fix a degree sequence $D$ and let $\alpha_k(D)$ be the minimum value of $\alpha_k(G)$ over all loop multigraphs $G$ with degree sequence $D$. Let $s=|\{x \in D: x<k\}|$ and $c=|\{x \in D: x=k\}|$. Clearly $\alpha_k(D) \geq s$.

\noindent {\bf Case 1.} Suppose that $k$ is even. It suffices to exhibit a loop multigraph $G$ with degree sequence $D$ for which $\alpha_k(G) \leq s$. Take $G$ to be any loop multigraph with degree sequence $D$ in which each vertex of odd degree is incident with exactly one non-loop edge and each vertex of even degree is incident only with loop edges. In $G$, any vertex of degree at least $k$ is incident with at least $\frac{k}{2}$ loops and hence cannot be in any $k$-independent set. Thus $\alpha_k(G) \leq s$ as required.

\noindent {\bf Case 2.} Suppose that $k$ is odd. We first show that $\alpha_k(D) \geq \lceil\frac{1}{2}(s+c)\rceil$. Note that, because $k$ is odd, any vertex of degree $k$ in a loop multigraph has at least one neighbour other than itself. Let $G$ be an arbitrary loop multigraph with degree sequence $D$. Let $G'$ be the loop multigraph obtained from $G$ by deleting all vertices of degree greater than $k$. Let $G''$ be the loop multigraph obtained from $G'$ by deleting all vertices of $G'$ that are incident only with loops, noting that all such vertices have degree at most $k-1$. Using Lemma~\ref{l:dom}, take $T$ to be a dominating set for $G''$ of size at most $\lfloor\frac{1}{2}|V(G'')|\rfloor$. Then $V(G') \setminus T$ is a $k$-independent set in $G$ and $|V(G') \setminus T| \geq \lceil\frac{1}{2}(s+c)\rceil$ because $|V(G')|=s+c$ and $|T| \leq \lfloor\frac{1}{2}|V(G'')|\rfloor \leq \lfloor\frac{1}{2}(s+c)\rfloor$. So we do indeed have $\alpha_k(D) \geq \lceil\frac{1}{2}(s+c)\rceil$.

It remains to exhibit a loop multigraph $G$ with degree sequence $D$ for which $\alpha_k(G) \leq \max(s,\lceil\frac{1}{2}(s+c)\rceil)$. Let $D=\{d_1,\ldots,d_n\}$ where $d_1 \leq \ldots \leq d_n$, and note that $1 \leq d_i < k$ for each $i \in \{1,\ldots,s\}$, $d_i = k$ for each $i \in \{s+1,\ldots,s+c\}$, and $d_i > k$ for each $i \in \{s+c+1,\ldots,n\}$. Given two matchings $M_1$ and $M_2$ that we will select, we will define $G$ to be the unique loop multigraph on vertex set $V=\{1,\ldots,n\}$ so that the edges in $E(M_1) \cup E(M_2)$ are edges of $G$, all other edges of $G$ are loops, and $\deg_G(i)=d_i$ for each $i \in V$. Let $M_1$ be the matching such that
\[
E(M_1)=
\left\{
  \begin{array}{ll}
    \big\{\{i,s+i\}:i \in \{1,\ldots,c\}\big\} & \hbox{if $c \leq s$;} \\
    \big\{\{i,s+i\}:i \in \{1,\ldots,s\}\big\} \cup \big\{\{2s+2i-1,2s+2i\}:i \in \{1,\ldots,\lfloor\frac{c-s}{2}\rfloor\}\big\} & \hbox{if $c >s$.}
  \end{array}
\right.\]
Next select any matching $M_2$ such that
\[V(M_2)=\{i \in V(M_1):\hbox{$d_i$ is even}\} \cup \{i \in V \setminus V(M_1):\hbox{$d_i$ is odd}\},\]
noting that the set specified for $V(M_2)$ has even cardinality because $|\{i \in V(M_1):\hbox{$d_i$ is even}\}| \equiv |\{i \in V(M_1):\hbox{$d_i$ is odd}\}| \mod{2}$ since $|V(M_1)|$ is even. Now form the loop multigraph $G$ as described above. To see that $G$ exists note that, by the definition of $M_2$, $d_i$ is odd for each $i \in V$ that is in precisely one of $V(M_1)$ and $V(M_2)$ and $d_i$ is even (and hence at least 2) for each $i \in V$ that is in both or neither of $V(M_1)$ and $V(M_2)$.

Note that $V(M_1) \subseteq \{1,\ldots,s+c\}$. So, in $G$, any vertex in $\{s+c+1,\ldots,n\}$ is incident with at least $\frac{k+1}{2}$ loops and hence cannot be in any $k$-independent set. Thus a $k$-independent set in $G$ must be a subset of $\{1,\ldots,s+c\}$. Furthermore, it can be seen that any $k$-independent set in $G$ can contain at most one endpoint of each edge in $M_1$. Thus $\alpha_k(G) \leq s+c-|E(M_1)|$ and hence $\alpha_k(G) \leq \max(s,\lceil\frac{1}{2}(s+c)\rceil)$ because $|E(M_1)|=\min(c,\lfloor\frac{1}{2}(s+c)\rfloor)$.
\end{proof}

\section{Simple graphs}\label{s:simple}

We conclude by noting that, for the case of simple graphs, the most natural modification of the bound of Theorem~\ref{kIndepBound} fails. To see this, suppose $k=3$ and consider the degree sequence $D=\{1,3,4,4,4,5,5\}$. Under the natural modification of our definitions for $\Omega$ and $b$ we would have $\Omega(D)=\{0,3,3,3,3,4\}$, $\Omega^2(D)=\{0,2,2,2,2\}$ and hence $b(D)=5$. However, Figure~\ref{fig:simpleCE} shows a simple graph with degree sequence $D$ such that every $3$-independent set has size at most $4$.  Adapting the techniques of this paper to the case of simple graphs is an area of ongoing research.
\begin{figure}[h!]
\centering
 \includegraphics[scale=1]{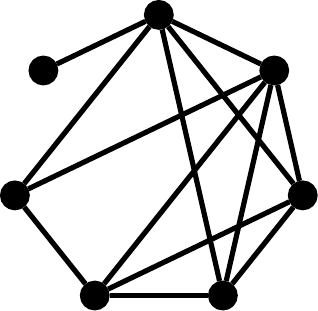}
\caption{Simple graph with degree sequence $\{1,3,4,4,4,5,5\}$ and no $3$-independent set of size 5}
\label{fig:simpleCE}
\end{figure}

\medskip
\noindent\textbf{Acknowledgments:} Thanks to an anonymous referee for pointing out an oversight in our treatment of multigraphs with loops. Thanks also to Rakhi Singh for preliminary computations concerning covering numbers. This work was supported by Australian Research Council grants DP150100506 and FT160100048.


  \let\oldthebibliography=\thebibliography
  \let\endoldthebibliography=\endthebibliography
  \renewenvironment{thebibliography}[1]{%
    \begin{oldthebibliography}{#1}%
      \setlength{\parskip}{0.4ex plus 0.1ex minus 0.1ex}%
      \setlength{\itemsep}{0.4ex plus 0.1ex minus 0.1ex}%
  }%
  {%
    \end{oldthebibliography}%
  }

\end{document}